\theoremstyle{plain}
\newtheorem{thm}{Theorem}[section]
\newtheorem*{thma}{Theorem A}
\newtheorem*{thmb}{Theorem B}
\newtheorem*{corc}{Corollary C}
\newtheorem*{thmd}{Theorem D}
\newtheorem{lem}[thm]{Lemma}
\newtheorem{prop}[thm]{Proposition}
\newtheorem{cor}[thm]{Corollary}
\theoremstyle{definition}
\newtheorem{defn}[thm]{Definition}
\newtheorem*{defn*}{Definition}
\newtheorem{exmp}[thm]{Example}
\newtheorem{rem}[thm]{Remark}
\numberwithin{equation}{section}
\newcommand{\N}{\mathbb{N}}
\newcommand{\Z}{\mathbb{Z}}
\newcommand{\T}{\mathbb{T}}
\newcommand{\cU}{\mathcal{U}}
\newcommand{\Stab}{\operatorname{Stab}}
\newcommand{\Sp}{\operatorname{Sp}}
\newcommand{\cay}{\operatorname{Cay}}
\newcommand{\diam}{\operatorname{diam}}
\newcommand{\rg}{\mathscr{G}}
\newcommand{\cf}{\mathcal{F}}
\begin{document}

\title[Maximal amenable subalgebras in group von Neumann algebras]{Maximal amenable subalgebras of von Neumann algebras associated with hyperbolic groups}

\begin{abstract}
We prove that for any infinite, maximal amenable subgroup $H$ in a hyperbolic group $G$, the von Neumann subalgebra $LH$ is maximal amenable inside $LG$. It provides many new, explicit examples of maximal amenable subalgebras in II$_1$ factors. We also prove similar maximal amenability results for direct products of relatively hyperbolic groups and orbit equivalence relations arising from measure-preserving actions of such groups.
\end{abstract}

\author{R\'emi Boutonnet and Alessandro Carderi}
\address{ENS Lyon \\
UMPA UMR 5669 \\
69364 Lyon cedex 7 \\
France}
\email{rboutonnet@ucsd.edu}
\email{alessandro.carderi@ens-lyon.fr}
\thanks{Research partially supported by ANR grant NEUMANN}

\maketitle

\section{Introduction}

Hyperfinite von Neumann algebras form the simplest and most fundamental class of von Neumann algebras.  This class is very well understood: Murray and von Neumann proved that there is a unique hyperfinite II$_1$ factor and Connes celebrated result \cite{Co76} states that hyperfinite von Neumann algebras are exactly the amenable ones. This characterization implies in particular that all von Neumann subalgebras of a hyperfinite tracial von Neumann algebra are completely described: they are hyperfinite. Up to now, such an understanding of subalgebras is out of reach for a non-hyperfinite von Neumann algebra.

Thus given a II$_1$ factor, it is natural to study the structure of its hyperfinite subalgebras. In the sixties, Kadison adressed a general question: is any self-adjoint element in a II$_1$ factor $M$ contained in a hyperfinite subfactor of $M$?
A first answer to this question was provided by Popa, who showed \cite{Po83} that the von Neumann subalgebra of $L\mathrm{F}_n$ ($n \geq 2$) generated by one of the generators of $\mathrm{F}_n$ is maximal amenable, and yet it is abelian.

Recently there has been some further work in this direction. In 2006, Shen \cite{Sh06} extended the work of Popa to countable direct products of free group factors, providing the first example of an abelian maximal amenable subalgebra in a McDuff factor. Subsequently, the authors in \cite{CFRW08} investigated the radial subalgebra of $L\mathrm{F}_n$ and they managed to prove that it is also maximal amenable.
In 2010, Jolissaint \cite{Jo10} extended Popa's result, providing examples of maximal amenable subalgebras in factors associated to amalgamated free-product groups, over finite subgroups.
Infinitely many explicit examples of maximal amenable subalgebras were also discovered by Houdayer \cite{Ho14a}.  He showed that the measure class on $\T^2$ associated to a maximal amenable, abelian subalgebra in a II$_1$ factor reaches a wide range. An example from subfactor theory was also provided in \cite{Bro14}.

In this article, we prove a general rigidity result on maximal amenability from a ``group-subgroup'' situation, to the ``von Neumann algebra-subalgebra'' they generate. The main focus is on hyperbolic groups. At the group level, infinite amenable subgroups of hyperbolic groups are completely understood: they are virtually cyclic, and they act in a nice way on the Gromov boundary of the group. Using this fact, we can show the following, generalizing the main result of \cite{Po83}.

\begin{thma}
Consider a hyperbolic group $G$ and an infinite, maximal amenable subgroup $H < G$. 
Then the group von Neumann algebra $LH$ is maximal amenable inside $LG$.
\end{thma}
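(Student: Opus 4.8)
The plan is to run Popa's asymptotic orthogonality method. Write $M = LG$ and $P = LH$; since $H$ is amenable the subalgebra $P$ is amenable, so the task is to show that every intermediate amenable algebra $P \subseteq Q \subseteq M$ coincides with $P$. Because $H$ is infinite and maximal amenable in a hyperbolic group, it is virtually cyclic; I would fix a finite-index copy of $\Z$ inside it, say $\langle h_0\rangle$, and set $A = L\langle h_0\rangle \subseteq P$, a diffuse abelian subalgebra. Popa's machinery then reduces the statement to two inputs: a control-of-relative-commutant condition $A' \cap M \subseteq P$, and the \emph{asymptotic orthogonality property} (AOP) of $A$ relative to $P$ inside $M$. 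Granting these, amenability of $Q$ produces a net of unit vectors in the coarse bimodule $L^2(M)\otimes\overline{L^2(M)}$ that is asymptotically tracial and asymptotically $Q$-central; since $A\subseteq Q$, these vectors are in particular asymptotically $A$-central, and the AOP forces them to concentrate on $L^2(P)\otimes\overline{L^2(P)}$. Testing against $q-E_P(q)$ for $q\in Q$ then yields $\|q-E_P(q)\|_2=0$, i.e. $Q\subseteq P$.

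First I would dispose of the two algebraic inputs, which is where the group theory highlighted in the introduction enters. The relative commutant bound $A'\cap M\subseteq P$ amounts to showing that the (quasi-)centraliser of $h_0$ in $G$ lies in $H$: any element almost commuting with the powers of $h_0$ generates together with $\langle h_0\rangle$ an amenable, hence virtually cyclic, subgroup, which by maximality of $H$ must sit inside $H$. The same idea gives the almost malnormality of $H$, namely $|gHg^{-1}\cap H|<\infty$ for $g\notin H$: an infinite intersection would be virtually cyclic with endpoint pair $\partial H\subseteq\partial G$ stabilised by $g$, forcing $g$ into the stabiliser of that pair, which maximality identifies with $H$ itself. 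These are purely group-theoretic and I expect them to follow formally from the structure of elementary subgroups of hyperbolic groups together with the maximality of $H$.

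The heart of the argument, and the step I expect to be the main obstacle, is the AOP. I would reduce it to one geometric estimate. By density it suffices to treat $a,b$ finitely supported on $G\setminus H$ and to prove, for every bounded sequence $(x_n)$ in $M\ominus P$ with $\|[x_n,u]\|_2\to 0$ for the unitaries $u\in A$, that
$$\langle a x_n, x_n b\rangle \longrightarrow 0.$$
Expanding in the group basis, the only possible obstruction is controlled by the number of $h\in H$ for which a product $s h t$, with $s,t\notin H$ fixed, returns to a bounded neighbourhood of $H$. Here hyperbolicity does the work: the orbit of $H$ is a quasi-convex quasi-line, and two distinct cosets fellow-travel only on a bounded set — this is precisely almost malnormality seen geometrically through thin triangles — so the count is finite and uniformly bounded. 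The hard part will be to convert the asymptotic $A$-centrality of $(x_n)$ into genuine spreading along the $\langle h_0\rangle$-direction so that this count can be exploited; this is where the north–south dynamics of the virtually cyclic $H$ on the Gromov boundary $\partial G$, and the amenability of the action $G\curvearrowright\partial G$, should enter, pushing the mass of $x_n$ toward the two fixed points of $H$ while the cross terms $sht$ stay away from $H$. Making this simultaneously quantitative and uniform in $n$ is the delicate point; the remainder is the standard Popa reduction described above.
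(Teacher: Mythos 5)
Your overall plan is the same as the paper's: a Popa-style asymptotic orthogonality argument run on almost-central sequences, fed by the structure theory of infinite maximal amenable subgroups of hyperbolic groups (virtual cyclicity, stabilizer of the endpoint pair, almost malnormality). The paper merely packages it differently --- it works with property Gamma of the intermediate algebra (Connes: diffuse finite amenable implies Gamma) and proves that $LH$ is \emph{maximal Gamma}, rather than running the coarse-bimodule version of Popa's machine --- and your two algebraic inputs (the relative commutant bound and almost malnormality) are correct and do follow from the boundary dynamics as you say.

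However, there is a genuine gap, located exactly at the step you yourself call ``the main obstacle'': the asymptotic orthogonality property is never proved, and the heuristic you offer for it is off target. Counting the $h \in H$ for which $sht$ returns to a bounded neighbourhood of $H$ is almost malnormality seen through thin triangles, and that statement only yields the \emph{mixing} property of $LH \subset LG$ (Proposition~\ref{propmixing}), which is strictly weaker than what you need: in the AOP pairing $\langle u_s x_n u_t, x_n\rangle = \sum_g x_n(g)\overline{x_n(sgt)}$ the sum runs over those $g$ with both $g$ and $sgt$ \emph{outside} $H$ (since $x_n \perp L H$), a region about which malnormality says nothing. The missing idea is a set $F \subset G\setminus H$ with two simultaneous properties: (i) some sequence $(h_k)$ in $H$ makes the conjugates $h_kFh_k^{-1}$ pairwise disjoint (an ``$H$-roaming'' set, Definition~\ref{Hroaming}) --- this is the precise mechanism converting asymptotic $A$-centrality of $(x_n)$ into $\Vert P_F(x_n)\Vert_2 \to 0$ (Lemma~\ref{lemmalnormal}); and (ii) $sF^ct \cap F^c$ is finite, which kills all remaining cross terms. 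Producing such an $F$, as $V^c \cap (G\setminus H)$ for a well-chosen neighbourhood $V$ of $\{a_-,a_+\}$, is where the real work lies: it requires a north--south contraction estimate for the left action of powers of $a$ (Lemma~\ref{lefthyp}), a \emph{uniform in $k$} statement that right multiplication by $a^k$ does not push points of a small neighbourhood of $a_+$ minus $H$ out of a prescribed neighbourhood (Lemma~\ref{righthyp}), and an inductive construction of the disjoining sequence (Proposition~\ref{conjughyp}). Your ``pushing the mass of $x_n$ toward the fixed points'' is the right picture, but it is precisely this quantitative statement, and your proposal leaves it unproved. One further correction: amenability of the action $G \curvearrowright \partial G$ plays no role in this step --- it enters only through Ozawa's solidity of $LG$, which serves to identify maximal Gamma with maximal amenable; the dynamical input needed for the key construction is purely topological.
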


This answers a question of Cyril Houdayer \cite[Problème 3.13]{Ho13}.

Since any maximal amenable subgroup $H$ of a hyperbolic group is virtually cyclic, the associated von Neumann algebra $LH$ is far from being a factor. By Remark \ref{unique}, we obtain many counterexamples to Kadison's question, even in property (T) factors. For instance factors of the form $L\Gamma$, with $\Gamma$ a cocompact lattice in $\Sp(n,1)$, are counterexamples with property (T).

The proof of Theorem A is in the spirit of Popa's asymptotic orthogonality property \cite{Po83}. It relies on an analysis of $LH$-central sequences and property Gamma. By definition, a diffuse finite von Neumann algebra $M$ has \textit{property Gamma} if it admits a sequence of unitaries $(u_n)_n\subset M$ which tends weakly to $0$ such that for every $x\in M$, \[\lim_n\|xu_n-u_nx\|_2= 0.\]

By \cite{Co76}, diffuse finite amenable von Neumann algebras have property Gamma. What we really show is that $LH\subset LG$ is maximal Gamma, that is, it is maximal among von Neumann subalgebras of $LG$ with property Gamma. However amenability and property Gamma coincide for subalgebras of solid von Neumann algebras (\cite[Proposition 7]{Oz04}) and the main result of \cite{Oz04} shows that $LG$ is solid, whenever $G$ is hyperbolic.

Using similar techniques, we can prove the following result for relatively hyperbolic groups. 

\begin{thmb}
Let $G$ be a group which is hyperbolic relative to a family $\rg$ of subgroups of $G$ and consider an infinite subgroup $H \in \rg$ such that $LH$ has property Gamma. 
Then the group von Neumann algebra $LH$ is maximal Gamma inside $LG$.
\end{thmb}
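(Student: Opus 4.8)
The plan is to establish a relative asymptotic orthogonality property (AOP) for the inclusion $LH\subseteq LG$ and then deduce maximal Gamma by the usual contradiction argument. Suppose $LH\subseteq P\subseteq LG$ with $P$ having property Gamma; I want to conclude $P=LH$. Since $LH$ is diffuse, property Gamma of $P$ provides unitaries $(u_n)\subseteq P$ with $u_n\to 0$ weakly and $\|u_nx-xu_n\|_2\to 0$ for all $x\in P$. As $(u_n)\subseteq P$, the identity $\tau(u_ny)=\tau(u_nE_P(y))$ upgrades weak nullity in $P$ to weak nullity in $LG$, and the commutator condition holds in particular on $LH$. If $P\neq LH$, pick $b_0\in P\setminus LH$ and set $b=b_0-E_{LH}(b_0)\in P$, so $b\neq 0$ and $E_{LH}(b)=0$. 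Then $\|bu_n-u_nb\|_2\to 0$, whereas the AOP will give $\langle bu_n,u_nb\rangle\to 0$; since $\|bu_n\|_2=\|u_nb\|_2=\|b\|_2$, expanding $\|bu_n-u_nb\|_2^2=2\|b\|_2^2-2\operatorname{Re}\langle bu_n,u_nb\rangle$ forces $\|b\|_2=0$, a contradiction. Everything thus reduces to proving: for $a,b$ with $E_{LH}(a)=E_{LH}(b)=0$ and $(u_n)$ as above, $\langle au_n,u_nb\rangle\to 0$.

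The next step is a Fourier reduction. Approximating $a,b$ by finitely supported elements supported on $G\setminus H$ and writing $u_n=\sum_g c^{(n)}_g u_g$, a direct computation gives $\langle au_n,u_nb\rangle=\sum_{s,t}a_s\overline{b_t}\,f_n(s,t)$, where $f_n(s,t)=\sum_g c^{(n)}_g\overline{c^{(n)}_{sgt^{-1}}}=\langle u_n,u_{s^{-1}}u_nu_t\rangle$. So it suffices to prove $f_n(s,t)\to 0$ for all $s,t\in G\setminus H$. The two hypotheses on $(u_n)$ translate into: $c^{(n)}_g\to 0$ pointwise (weak nullity), and $c^{(n)}_g-c^{(n)}_{hgh^{-1}}\to 0$ in $\ell^2(G)$ for each $h\in H$ (asymptotic invariance of the mass under $H$-conjugation). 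I also record the marginal identity $\sum_t|f_n(s,t)|^2=\|u_n^\ast u_su_n\|_2^2=1$, which pins $f_n(s,\cdot)$ to the unit sphere of $\ell^2(G)$ and will control the tails in $t$.

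The hard part will be exactly this estimate $f_n(s,t)\to 0$, and it is here that relative hyperbolicity is indispensable. Any manipulation using only conjugation-invariance and $\|u_n\|_2=1$ is circular, bounding $f_n$ merely by $1$: indeed, a genuinely $H$-conjugation-invariant unit mass escaping to infinity and preserved by the map $g\mapsto sgt^{-1}$ would force $f_n(s,t)\to 1$. Such configurations really do arise when $H$ fails to be almost malnormal — for a direct-factor situation $H\times K$, the normalized mass supported on $\{e\}\times K_n$ is conjugation-invariant and is preserved by suitable $s,t\notin H$, and there $LH$ is not maximal. The point of the hypothesis is that this is forbidden for $H\in\rg$: peripheral subgroups of relatively hyperbolic groups are almost malnormal, $|sHs^{-1}\cap H|<\infty$ for $s\notin H$, and distinct cosets $gH$ diverge with bounded coset penetration.

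To exploit this, I would fix $s,t\in G\setminus H$ and show that the only way $f_n(s,t)$ can fail to vanish is for the mass $|c^{(n)}_g|^2$ to concentrate, at infinity, along the peripheral (horoball) directions of $H$, in such a way that $g$ and $sgt^{-1}$ simultaneously lie in the bulk of $\operatorname{supp} c^{(n)}$. Relative hyperbolicity controls precisely this overlap: using bounded coset penetration one localizes the set of returning $g$ to a neighbourhood $EHE$ of a single coset geometry (for a finite $E$ depending on $s,t$), and on $EHE$ the finiteness of $sHs^{-1}\cap H$ together with the asymptotic conjugation-invariance and pointwise nullity of $c^{(n)}$ drives the contribution to $0$; the bound is uniform enough to survive the finite sums over $\operatorname{supp}a$ and $\operatorname{supp}b$, closing the AOP. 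I would finally remark that, exactly as in Theorem A, this argument never uses solidity of $LG$ — which may genuinely fail for relatively hyperbolic $G$ — and this is why the conclusion is stated as maximal Gamma rather than maximal amenability; when $LG$ happens to be solid one recovers maximal amenability of $LH$ via \cite{Oz04}.
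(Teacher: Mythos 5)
Your first two paragraphs are correct and reproduce, essentially verbatim, the paper's Proposition \ref{prop:condition}: given an intermediate algebra $LH \subset P \subset LG$ with property Gamma, take its weakly null central unitaries $(u_n)$ and reduce, by the linearity/density argument, to showing $\langle u_s u_n u_t, u_n\rangle \to 0$ for all $s,t \in G\setminus H$, using only weak nullity and $LH$-centrality of $(u_n)$. Up to this point you and the paper agree.

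The gap is that this estimate --- which you yourself call ``the hard part'' --- is never proved; your last two paragraphs are a plan, not an argument. In the paper the estimate rests on two precise statements. First, Lemma \ref{lemmalnormal}: if $F \subset G \setminus H$ is $H$-roaming in the sense of Definition \ref{Hroaming}, i.e.\ admits a sequence $(h_k) \subset H$ with $h_kFh_k^{-1}$ pairwise disjoint, then $\Vert P_F(u_n)\Vert_2 \to 0$ for every bounded $LH$-central sequence; this pigeonhole mechanism is the \emph{only} way $LH$-centrality is converted into localization of Fourier mass. Second, Proposition \ref{conjugpara}: for fixed $s,t \notin H$ there exists such an $F$ with $K := sF^ct \cap F^c$ finite; then $\langle u_s u_n u_t, u_n \rangle$ reduces to an inner product against $P_K(u_n)$, which vanishes because $K$ is finite and $u_n \to 0$ weakly. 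Proposition \ref{conjugpara} is where all of the relative hyperbolicity is consumed: one needs the topology on $\Delta\Gamma$ near the cone vertex $c=[H]$, continuity of the right action at $c$ (Lemma \ref{Oz}), and the uniform left/right dynamics of Lemmas \ref{leftpara} and \ref{rightpara} to build the disjoining sequence inside a shrinking family of neighbourhoods of $c$. Your sketch offers no substitute for this machinery. What you do invoke precisely --- almost malnormality of peripheral subgroups --- is strictly too weak: by Proposition \ref{propmixing} it is equivalent to mixing of the inclusion $LH \subset LG$, a statement about averages over unitaries of $LH$, whereas $f_n(s,t)$ involves the central sequence $(u_n)\subset P$, about which mixing says nothing. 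The assertions that ``one localizes the set of returning $g$ to a neighbourhood $EHE$'' and that the resulting contribution is ``driven to $0$'' by a bound ``uniform enough'' are exactly the missing content: no finite set $E$ is produced, no inequality is written, and no mechanism is given by which pointwise nullity plus $\ell^2$-asymptotic conjugation invariance kill mass on an infinite set such as $EHE\setminus H$. Until you state and prove a roaming-set statement in the spirit of Proposition \ref{conjugpara} (or a genuine quantitative replacement via bounded coset penetration), the proof is incomplete at its central step.
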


Using results of Osin \cite{Os06a,Os06b}, we obtain the following corollary, which generalizes Theorem A and the main result of \cite{Jo10}. 

\begin{corc}
Let $G$ be a group which is hyperbolic relative to a family $\rg$ of amenable subgroups and $H$ be an infinite maximal amenable subgroup of $G$. 
Then the group von Neumann algebra $LH$ is maximal amenable inside $LG$.
\end{corc}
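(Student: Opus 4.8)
The plan is to deduce Corollary C from Theorem B, using Osin's structure theory to locate $H$ inside a suitable amenable peripheral family. The starting point is a dichotomy for infinite amenable subgroups of $(G,\rg)$. Since $G$ is hyperbolic relative to $\rg$, every element of $G$ is elliptic, parabolic, or loxodromic, and a ping-pong argument shows that an amenable subgroup cannot contain two independent loxodromics. Consequently, by the results of Osin \cite{Os06a,Os06b}, the infinite maximal amenable subgroup $H$ falls into exactly one of two cases: either $H$ is \emph{parabolic}, i.e.\ contained in a conjugate $gH_\lambda g^{-1}$ of some $H_\lambda\in\rg$, or $H$ is \emph{loxodromic}, i.e.\ it contains a loxodromic element $g$ and is therefore contained in the maximal elementary (virtually cyclic) subgroup $E(g)=\Stab\{g^+,g^-\}$.

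In the parabolic case, since the peripheral subgroups are amenable, $gH_\lambda g^{-1}$ is amenable, so maximality of $H$ forces $H=gH_\lambda g^{-1}$. Conjugation by the unitary $u_g\in LG$ is an inner automorphism of $LG$ carrying $LH_\lambda$ onto $LH$, so it preserves maximal amenability; I may thus assume $H=H_\lambda\in\rg$. In the loxodromic case, maximality forces $H=E(g)$, which is virtually cyclic and hence amenable but need not lie in $\rg$. Here I would enlarge the peripheral family: by Osin's theory of elementary subgroups \cite{Os06b}, the group $G$ remains hyperbolic relative to $\rg'=\rg\cup\{E(g)\}$, and $\rg'$ is again a family of amenable subgroups with $H=E(g)\in\rg'$. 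In both cases I have arranged that $H$ is a peripheral subgroup for a relatively hyperbolic structure on $G$ with amenable peripherals.

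Now I apply Theorem B. In each case $H$ is infinite amenable, so $LH$ is a diffuse amenable von Neumann algebra and therefore has property Gamma by \cite{Co76}. Theorem B then yields that $LH$ is maximal Gamma inside $LG$. It remains to upgrade this to maximal amenability: if $LH\subseteq P\subseteq LG$ with $P$ amenable, then $P$ is diffuse (it contains the diffuse algebra $LH$) and amenable, hence has property Gamma by \cite{Co76}; maximality of $LH$ among Gamma subalgebras forces $P=LH$. This shows that $LH$ is maximal amenable in $LG$.

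The main obstacle is the loxodromic case, specifically the passage to the enlarged peripheral structure $\rg'=\rg\cup\{E(g)\}$: one must verify, via Osin's results on elementary subgroups of relatively hyperbolic groups, that adjoining the maximal virtually cyclic subgroup $E(g)$ preserves relative hyperbolicity, so that Theorem B becomes applicable with $H$ as a peripheral subgroup. The parabolic case and the Gamma-to-amenable upgrade are comparatively routine.
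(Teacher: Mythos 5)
Your proposal is correct and takes essentially the same route as the paper: amenability forces $H$ to be elementary, the parabolic and loxodromic cases are handled respectively by conjugating into $\rg$ and by Osin's result that $G$ remains hyperbolic relative to $\rg\cup\{E(g)\}$ \cite{Os06b}, and Theorem B is then applied. The only difference is the final upgrade from maximal Gamma to maximal amenable: you use just the Connes direction (any diffuse finite amenable intermediate algebra has property Gamma, \cite{Co76}), which indeed suffices, whereas the paper quotes solidity of $LG$ \cite{Oz06} to get the full equivalence of the two maximality notions.
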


By the comments after Proposition 12 in \cite{Oz06}, $LG$ is solid for $G$ as in Corollary C, so maximal amenable is equivalent to maximal Gamma. 

Limit groups are examples of groups $G$ covered by this corollary.

It is also possible to prove similar results in the context of hyperbolically embedded subgroups, in the sense of \cite{DGO11}: generalizing our techniques one can show that if $H < G$ is an infinite amenable subgroup which is hyperbolically embedded then $LH$ is maximal amenable inside $LG$.

Finally, we extend our results to products of groups as above. We also allow the groups to act on an amenable von Neumann algebra, and we get a similar result about the crossed product von Neumann algebra. Such a product situation were already investigated in \cite{Sh06} and \cite{CFRW08}. We thank Stuart White for suggesting us to study this case. 

\begin{thmd}
Let $n \geq 1$, and consider for all $i = 1,\dots,n$ an inclusion of groups $H_i < G_i$ as in Theorem B. Put $G := G_1 \times \cdots \times G_n$ and $H := H_1 \times \cdots \times H_n$.

Then for any trace-preserving action of $G$ on a finite amenable von Neumann algebra $(Q,\tau)$, the crossed-product $Q \rtimes H$ is maximal amenable inside $Q \rtimes G$.
\end{thmd}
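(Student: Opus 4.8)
The plan is to prove the stronger statement that $Q \rtimes H \subseteq Q \rtimes G$ is \emph{maximal Gamma}, and to deduce maximal amenability from it. Indeed, when the $H_i$ are amenable (the case in which the conclusion is meaningful, e.g.\ the setting of Corollary~C), $Q \rtimes H$ is amenable and diffuse, so any intermediate amenable algebra $Q \rtimes H \subseteq P \subseteq Q \rtimes G$ is diffuse and amenable and hence has property Gamma by \cite{Co76}; maximal Gamma then forces $P = Q \rtimes H$. So fix such a $P$ with property Gamma and a central sequence of unitaries in it. Since $Q \rtimes H$ itself has property Gamma, the standard correction procedure of \cite{Po83} lets us represent this central sequence by a unitary $\xi \in P' \cap (Q \rtimes G)^\omega$ that is orthogonal to $(Q \rtimes H)^\omega$. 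It then suffices to show that every $x \in P$ with $E_{Q \rtimes H}(x) = 0$ is zero.

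The engine is an asymptotic orthogonality property for the pair $Q \rtimes H \subseteq Q \rtimes G$: for all $x, y \in (Q \rtimes G) \ominus (Q \rtimes H)$ and every central sequence $\xi$ of $Q \rtimes H$ orthogonal to $(Q \rtimes H)^\omega$, the vectors $x\xi$ and $\xi y$ are orthogonal in $L^2((Q \rtimes G)^\omega)$. Granting this, I conclude exactly as in \cite{Po83}: taking $y = x$, the centrality relation $x\xi = \xi x$ together with the orthogonality $\langle x\xi, \xi x\rangle = 0$ gives $\|x\xi\|_2^2 = \langle x\xi, x\xi\rangle = \langle x\xi, \xi x\rangle = 0$; as $\xi$ is a unitary, $\|x\|_2 = \|x\xi\|_2 = 0$, so $x = 0$ and $P = Q \rtimes H$. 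The entire theorem is thereby reduced to this orthogonality statement, and the work lies in its two new features relative to Theorem B: the amenable coefficients $Q$, and the product group $G = G_1 \times \cdots \times G_n$.

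To absorb the coefficients I would expand elements of $Q \rtimes G$ into Fourier series $x = \sum_g x_g u_g$ with $x_g \in Q$, the hypothesis $E_{Q \rtimes H}(x) = 0$ meaning $x_g = 0$ for $g \in H$. The sequence $\xi$ commutes asymptotically with $Q$ and with every $u_h$, $h \in H$, and here amenability of $Q$ is decisive: approximating $\xi$ by conditioning onto the relative commutant of a large finite-dimensional subalgebra of the hyperfinite algebra $Q$, one reduces each inner product $\langle x\xi, \xi y\rangle$ to a $Q$-weighted, matrix-amplified version of the scalar expressions controlled by the group-level estimate of Theorem B. The point is that these finite-dimensional approximations do not interfere with the geometric orthogonality coming from the group, so that $Q$ is effectively absorbed and the problem is pushed back to a purely geometric statement about $G$ and $H$.

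The genuinely delicate step, and the one I expect to be the main obstacle, is the product structure: the $G$-action on $Q$ need not split as a product of actions, so $Q \rtimes G$ is \emph{not} a tensor product of the single-factor crossed products, and one cannot simply multiply the asymptotic orthogonality properties of the factors. Recursion is also blocked, since peeling off one coordinate would leave the non-amenable coefficient algebra $Q \rtimes G_i$ on which the absorption argument fails. Instead I would analyze the Fourier support of $x$ through the decomposition $G \setminus H = \bigcup_i \{ g : g_i \notin H_i \}$: for any such $x$ there is a coordinate $i$ in which the support escapes $H_i$, and in that coordinate the almost malnormality of the infinite amenable (hence virtually cyclic) subgroup $H_i$ in the relatively hyperbolic group $G_i$ makes the relevant double cosets $H_i \backslash G_i / H_i$ finite-to-one, which is precisely what lets the single-factor estimate apply \emph{uniformly in the remaining coordinates}. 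Setting up this coordinate dichotomy while carrying the amenable coefficients $Q$ along all factors simultaneously, rather than recursively, is the technical heart of the argument; once it is in place, combining the per-coordinate orthogonality with the coefficient absorption of the previous paragraph yields the required asymptotic orthogonality and hence the theorem.
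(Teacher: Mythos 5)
Your very first step---reducing Theorem D to the claim that $Q \rtimes H \subset Q \rtimes G$ is \emph{maximal Gamma}---is unrecoverable, because that claim is false. The paper warns of exactly this (``note that $Q \rtimes H \subset Q \rtimes G$ is not maximal Gamma in general''), and there is a counterexample squarely inside the scope of Theorem D. Take $n=2$, $Q=\C$, $G_i = \mathrm{F}_2 = \langle a_i,b_i\rangle$ and $H_i = \langle a_i\rangle$ (free products are hyperbolic relative to their factors, so this is an instance of Theorem B). The intermediate algebra $P_0 := L\langle a_1\rangle \,\bar{\otimes}\, L\mathrm{F}_2$ strictly contains $L(H_1\times H_2)$, and it has property Gamma: the unitaries $u_{a_1^k}\otimes 1$ commute \emph{exactly} with $P_0$ (the first leg is abelian) and tend weakly to $0$. (A diffuse $Q$ with trivial $G$-action kills the claim already for $n=1$.) So ``maximal Gamma forces $P = Q\rtimes H$'' is a reduction to a false statement. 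Your ``standard correction procedure'' also cannot run: since $L\mathrm{F}_2$ is a full factor, every central sequence of $P_0$ asymptotically lies in $L\langle a_1\rangle \otimes 1 \subset (Q\rtimes H)^\omega$, so no central sequence of $P_0$ orthogonal to $(Q\rtimes H)^\omega$ exists.

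Your fallback engine fails as well: the Popa-style asymptotic orthogonality property formulated \emph{relative to the whole of} $N := Q\rtimes H$ is false once $n\geq 2$. With the same groups, set $x_k := (2k+1)^{-1/2}\sum_{|j|\leq k} u_{a^jb\,a^{-j}} \in L\mathrm{F}_2$, where $a = a_2$, $b = b_2$. Then $\|x_k\|_2 = 1$, $\|x_k\|_\infty \leq 2$ (the conjugates $a^jba^{-j}$ are free generators, hence $\ast$-free Haar unitaries), $E_{LH_2}(x_k) = 0$, and $\|[x_k,u_{a}]\|_2^2 = 2/(2k+1) \to 0$. Hence $\xi := (1\otimes x_k)_k$ defines an element of $(N'\cap M^\omega)\ominus N^\omega$, where $M := L(G_1\times G_2)$. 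Yet for $x = y = u_g\otimes 1$ with $g\in G_1\setminus H_1$ we have $x,y\in M\ominus N$, $x\xi = \xi y$ exactly, and $\langle x\xi,\xi y\rangle = \|x_k\|_2^2 = 1 \neq 0$. The mechanism is precisely what your plan must confront: a relative central sequence can live entirely in one tensor factor, and elements supported in the other factor commute with it on the nose. Your final paragraph senses the coordinate dichotomy, but it is deployed to prove this false statement, so no amount of Fourier bookkeeping can complete it.

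What the paper actually does is different in a way that matters, and it also dissolves your objection to ``recursion.'' Fix one coordinate $i$ at a time, set $A_i := LH_i$, $Q_i := Q\rtimes \hat G_i$ (all the other coordinates), $N_i := Q_i \rtimes H_i$ and $M = Q_i\rtimes G_i$, and prove Houdayer's asymptotic orthogonality property \emph{relative to the small algebra} $A_i$ (Proposition \ref{relativecondition}): the sequences considered must lie in $M\ominus N_i$ (orthogonal to the big algebra $N_i$, which contains all other coordinates) and commute only with $LH_i$---conditions the counterexample $\xi$ above does not meet. This relative AOP follows from the roaming-set estimate with $s(F\cup H_i)^ct \subset F\cup H_i$ (Remark \ref{remB}) for \emph{arbitrary} coefficient algebras $Q_i$; non-amenability of $Q\rtimes \hat G_i$ is harmless, contrary to your worry that ``peeling off one coordinate'' is blocked, because no absorption of coefficients is needed: the projections $P_F$ act only on the group coordinate. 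Combined with weak mixing through $A_i$ (Example \ref{exmp}) and Houdayer's Theorem \ref{thmrelativeAOP}---whose hypothesis is \emph{amenability} of the intermediate algebra $P$, which is the genuine replacement for your false maximal-Gamma reduction---one gets $P\subset N_i$ for every $i$, and then $P \subset \bigcap_i N_i = Q\rtimes H$.
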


In particular, when $G$ and $H$ are as above, for any free measure preserving action on a probability space $G \curvearrowright (X,\mu)$, the equivalence relation on $(X,\mu)$ given by the $H$-orbits is maximal hyperfinite inside the equivalence relation given by the $G$-orbits.

In Theorem D, note that $Q \rtimes H \subset Q \rtimes G$ is not maximal Gamma in general. We will in fact use Houdayer's relative version of the asymptotic orthogonality property to conclude (\cite{Ho14b}). The argument relies on the same analysis of $LH$-central sequences.

\subsection*{Organization of the article}

Apart from the introduction, the paper contains four other sections. Section 2 gives a criterion to prove maximal amenability results in the context of group von Neumann algebras. It also contains the main definitions and tools regarding relatively hyperbolic groups. In section 3, we provide a proof of Theorem A for the reader which is not familiar with relatively hyperbolic groups. Section 4 turns to the general case of relatively hyperbolic groups and the proofs of Theorem B and Corollary C. Finally Section 5 is devoted to the proof of Theorem D.

\subsection*{Acknowledgement}

We are very grateful to Cyril Houdayer for showing \cite[Problème 3.13]{Ho13} to us and for his valuable comments helping us to improve the presentation of this work. We also warmly thank Stuart White for useful remarks and for attracting our attention on Theorem D. We finally thank the referees for useful comments.


\section{Preliminaries}

\subsection{Central sequences and group von Neumann algebras}
\label{central sequences}

In this section, we consider an inclusion of two countable discrete groups $H < G$. We denote by $LH \subset LG$ the associated von Neumann algebras and by $u_g$ the canonical unitaries in $LG$ that correspond to elements $g \in G$.

For a set $F \subset G$, we will by denote $P_F : \ell^2(G) \to \ell^2(F)$ the orthogonal projection onto $\ell^2(F)$.

As explained in the introduction, the proofs of our main results rely on an analysis of $LH$-central sequences. We describe here how the $H$-conjugacy action on $G$ allows localizing the Fourier coefficients of $LH$-central sequences in terms of projections $P_F$, $F \subset G$.

\begin{defn}
\label{Hroaming}
Let $H < G$ be an inclusion of two countable groups.
A set $F \subset G \setminus H$ is said to be {\it $H$-roaming} if there exists an infinite sequence $(h_k)_{k \geq 0}$ of elements in $H$ such that \[h_kFh_k^{-1} \cap h_{k'}Fh_{k'}^{-1} = \emptyset \text{ for all } k \neq k'.\]
Such a sequence $(h_k)_k$ is called a {\it disjoining sequence}.
\end{defn}

The following standard lemma is the key of our proofs.

\begin{lem}
\label{lemmalnormal}
Let $H < G$ be an inclusion of two countable groups and denote by $LH \subset LG$ the associated von Neumann algebras. Assume that $(x_n)_n$ is a bounded $LH$-central sequence in $LG$.

Then for any $H$-roaming set $F$ we have that $\lim_n \Vert P_F(x_n)\Vert_2 = 0$.
\end{lem}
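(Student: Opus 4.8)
The plan is to exploit the disjoining sequence $(h_k)_k$ together with the fact that conjugation by the unitaries $u_{h_k}$ preserves $\Vert\cdot\Vert_2$ and shifts Fourier supports in a controlled way. Writing $x_n = \sum_{g \in G} \hat{x}_n(g) u_g$, I would use that $u_{h_k} x_n u_{h_k}^* = \sum_g \hat{x}_n(g) u_{h_k g h_k^{-1}}$, so the Fourier coefficient of $u_{h_k} x_n u_{h_k}^*$ at a point $h_k g h_k^{-1} \in h_k F h_k^{-1}$ is exactly $\hat{x}_n(g)$. Consequently the key identity
\[
\Vert P_{h_k F h_k^{-1}}(u_{h_k} x_n u_{h_k}^*)\Vert_2 = \Vert P_F(x_n)\Vert_2
\]
holds for every $n$ and $k$, since conjugation by $u_{h_k}$ carries $\ell^2(F)$ isometrically onto $\ell^2(h_k F h_k^{-1})$.

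Next I would translate the $LH$-centrality into approximate invariance under these conjugations. As $u_{h_k} \in LH$, for each fixed $k$ one has $\Vert u_{h_k} x_n u_{h_k}^* - x_n\Vert_2 = \Vert u_{h_k} x_n - x_n u_{h_k}\Vert_2 \to 0$ as $n \to \infty$. Combining this with the identity above and the fact that $P_{h_k F h_k^{-1}}$ is a contraction, the triangle inequality yields, for each fixed $k$,
\[
\bigl| \Vert P_{h_k F h_k^{-1}}(x_n)\Vert_2 - \Vert P_F(x_n)\Vert_2 \bigr| \leq \Vert u_{h_k} x_n u_{h_k}^* - x_n\Vert_2 \xrightarrow[n\to\infty]{} 0 .
\]
Thus, asymptotically in $n$, the mass of $x_n$ on each disjoint translate $h_k F h_k^{-1}$ matches the mass on $F$ itself.

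The conclusion then comes from an orthogonality-plus-boundedness argument. Let $C := \sup_n \Vert x_n\Vert_\infty < \infty$, so $\Vert x_n \Vert_2 \le C$ for all $n$. Since the sets $h_k F h_k^{-1}$ are pairwise disjoint, for any fixed $K$ the projections $P_{h_k F h_k^{-1}}$, $0 \le k \le K-1$, have orthogonal ranges, whence $\sum_{k=0}^{K-1} \Vert P_{h_k F h_k^{-1}}(x_n)\Vert_2^2 \le \Vert x_n\Vert_2^2 \le C^2$ for every $n$. To convert the per-$k$ asymptotics into a bound on $\Vert P_F(x_n)\Vert_2$, I would pass to a subsequence $(n_j)$ realizing $L := \limsup_n \Vert P_F(x_n)\Vert_2$; along it the previous estimate forces $\Vert P_{h_k F h_k^{-1}}(x_{n_j})\Vert_2 \to L$ for each fixed $k$, so letting $j \to \infty$ in the finite sum gives $K L^2 \le C^2$. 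Letting $K \to \infty$ then yields $L = 0$, i.e. $\Vert P_F(x_n)\Vert_2 \to 0$.

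The only delicate point is the interchange of the two limits, over $k$ and over $n$: the approximation rate $\Vert u_{h_k} x_n u_{h_k}^* - x_n\Vert_2$ need not be uniform in $k$, so one cannot simply sum the estimates at a fixed $n$. Passing to a subsequence attaining the $\limsup$ and summing only finitely many (orthogonal, hence uniformly $C^2$-bounded) terms before sending $K \to \infty$ is precisely what circumvents this; boundedness of the sequence and disjointness of the roaming translates are the two ingredients that make the argument close.
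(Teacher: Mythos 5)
Your proof is correct and follows essentially the same route as the paper: conjugate by the disjoining sequence $(u_{h_k})_k$, use $LH$-centrality to transfer the $\ell^2$-mass of $x_n$ on $F$ to each translate $h_kFh_k^{-1}$, and then use pairwise disjointness plus the uniform bound $\Vert x_n\Vert_2 \le \Vert x_n\Vert_\infty$ to force $\limsup_n \Vert P_F(x_n)\Vert_2 = 0$. Your explicit subsequence argument handling the interchange of the limits in $k$ and $n$ is a careful (and welcome) elaboration of a step the paper compresses into its equation \eqref{limsup1}.
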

\begin{proof}
Assume that $F$ is an $H$-roaming set and consider a disjoining sequence $(h_k)_k \subset H$ for $F$. Since $(x_n)_n$ is $LH$-central, we have for all $k$
\begin{equation}\label{limsup1}
\limsup_n \Vert P_F(x_n) \Vert_2  = \limsup_n \Vert P_F(u_{h_k}x_n u_{h_k}^*)\Vert_2
 = \limsup_n \Vert P_{h_k^{-1}Fh_k}(x_n) \Vert_2.
\end{equation}
But $P_{h_k^{-1}Fh_k}(x_n) \perp P_{h_{k'}^{-1}Fh_{k'}}(x_n)$ for all $k \neq k'$ and all $n$. 
Thus we get that for any $N \geq 0$ and $n \geq 0$, 
\[ \Vert x_n \Vert_\infty^2 \geq \Vert x_n \Vert_2^2 \geq \sum_{k \leq N} \Vert P_{h_k^{-1}Fh_k}(x_n) \Vert_2^2.\]
Applying \eqref{limsup1}, we deduce that $\sup_n \Vert x_n \Vert_\infty^2 \geq N\limsup_n \Vert P_F(x_n) \Vert_2^2 $. 
Since $N$ can be arbitrarily large, we get the result.
\end{proof}

\begin{prop}
\label{prop:condition}
Let $H < G$ be an inclusion of two infinite countable groups. Assume that for any $s,t \in G \setminus H$, there exists an $H$-roaming set $F \subset G\setminus H$ such that $sF^ct\cap F^c$ is finite.

If $LH$ has property Gamma, then it is maximal Gamma inside $LG$.
\end{prop}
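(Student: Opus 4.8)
The plan is to prove the statement directly in the form: given an intermediate von Neumann subalgebra $LH \subseteq N \subseteq LG$ with property Gamma, I want to conclude that $N = LH$. Property Gamma furnishes a sequence of unitaries $(w_n)_n \subset N$ with $w_n \to 0$ weakly and $\Vert x w_n - w_n x\Vert_2 \to 0$ for every $x \in N$. Since $LH \subseteq N$, this sequence is in particular a bounded $LH$-central sequence in $LG$, so Lemma \ref{lemmalnormal} applies and yields $\Vert P_F(w_n)\Vert_2 \to 0$ for every $H$-roaming set $F$. This is the only point at which the geometric input (the existence of roaming sets) enters, and it will be combined with the finiteness hypothesis below.

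Assume for contradiction that $N \neq LH$. Writing $E_{LH} : LG \to LH$ for the trace-preserving conditional expectation, which on $\ell^2(G)$ is just the Fourier projection $P_H$, pick $x \in N \setminus LH$ and set $y := x - E_{LH}(x) \in N$. Then $E_{LH}(y) = 0$, i.e. the Fourier support of $y$ lies in $G \setminus H$, and $y \neq 0$, so after normalising $\Vert y\Vert_2 = 1$. Because $y \in N$ and left/right multiplication by the unitary $w_n$ preserves $\Vert\cdot\Vert_2$, asymptotic commutation gives
\[ \Vert y w_n - w_n y\Vert_2^2 = 2\Vert y\Vert_2^2 - 2\,\mathrm{Re}\,\langle y w_n, w_n y\rangle \to 0, \]
so $\mathrm{Re}\,\langle y w_n, w_n y\rangle \to \Vert y\Vert_2^2 = 1$. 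The whole game is now to show that in fact $\langle y w_n, w_n y\rangle \to 0$, which contradicts this.

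To compute the inner product, expand $y = \sum_{s \in G\setminus H} y_s u_s$ and $w_n = \sum_g w_n(g) u_g$ in the Fourier basis; a direct calculation gives $\langle y w_n, w_n y\rangle = \sum_{s,t} y_s\overline{y_t}\,\Phi_{s,t}(n)$, where $\Phi_{s,t}(n) := \sum_{g\in G} w_n(g)\overline{w_n(sgt^{-1})}$. Approximating $y$ in $\Vert\cdot\Vert_2$ by its truncation to a finite set $S \subset G\setminus H$ and using $\Vert y w_n\Vert_2 = \Vert w_n y\Vert_2 = 1$ together with the Cauchy--Schwarz inequality to control the error, it suffices to prove that $\Phi_{s,t}(n) \to 0$ for each fixed pair $s,t \in G\setminus H$.

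This last step is the crux. Fixing $s,t \in G \setminus H$, I apply the hypothesis to the pair $(s,t^{-1})$ (note $t^{-1} \in G\setminus H$) to obtain an $H$-roaming set $F \subset G\setminus H$ with $sF^c t^{-1} \cap F^c$ finite. I then split the sum defining $\Phi_{s,t}(n)$ into three pieces according to the positions of $g$ and $sgt^{-1}$: (i) $g \in F$; (ii) $g \in F^c$ and $sgt^{-1} \in F$; (iii) $g \in F^c$ and $sgt^{-1} \in F^c$. By Cauchy--Schwarz, pieces (i) and (ii) are each bounded by $\Vert P_F(w_n)\Vert_2$, which tends to $0$ by Lemma \ref{lemmalnormal}; piece (iii) is supported on the finite set $D := \{g : g \in F^c,\ sgt^{-1} \in sF^c t^{-1}\cap F^c\}$, hence is bounded by $\Vert P_D(w_n)\Vert_2$, which tends to $0$ because $w_n \to 0$ weakly forces each Fourier coefficient $w_n(g) \to 0$. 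Thus $\Phi_{s,t}(n) \to 0$, completing the contradiction. The main obstacle is precisely packaging this three-way decomposition cleanly: one must ensure that, outside a finite exceptional set, whenever $g$ escapes the roaming set $F$ its translate $sgt^{-1}$ falls back into $F$, which is exactly what finiteness of $sF^c t^{-1}\cap F^c$ provides, while the roaming property annihilates the remaining contributions through the central-sequence lemma.
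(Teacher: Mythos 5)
Your proposal is correct and follows essentially the same route as the paper's proof: property Gamma provides a weakly null central sequence of unitaries, asymptotic commutation with $y \in N \ominus LH$ forces $\langle y w_n, w_n y\rangle \to \Vert y \Vert_2^2$, and this is contradicted by combining Lemma \ref{lemmalnormal} on the roaming set with weak nullity of $(w_n)_n$ on the finite set $sF^ct^{-1}\cap F^c$, after the same linearity/density reduction to pairs $s,t \notin H$. Your three-way coefficient-level split (pieces (i)--(iii)) is just the paper's projection argument --- dropping $P_F(v_n)$ on each side and observing $\langle u_s P_{F^c}(v_n)u_t, P_{F^c}(v_n)\rangle \leq \Vert P_K(v_n)\Vert_2$ with $K$ finite --- written out in Fourier coordinates.
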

\begin{proof}
Assume that there exists an intermediate von Neumann algebra $P$ with property Gamma: $LH \subset P \subset LG$ and consider a central sequence $(v_n)_n\subset P$ of unitary elements which tends weakly to $0$. 

We have to show that any $a \in P \ominus LH$ is equal to $0$. On the one hand, we have that $\lim_n \langle av_na^*,v_n \rangle = \Vert a \Vert_2^2$ because the unitaries $v_n$ asymptotically commute with $a$. On the other hand we claim that this limit is $0$. 

Indeed, by a standard linearity/density argument, it is sufficient to check that for all $s, t \notin H$, we have $\lim_n \langle u_sv_nu_t,v_n \rangle = 0$.

So fix $s,t \in G \setminus H$. By assumption there exists an $H$-roaming set $F$ such that $K := sF^ct\cap F^c$ is finite. Since $(v_n)_n$ is $LH$-central and bounded, Lemma \ref{lemmalnormal} implies that $\lim_n \Vert P_F(v_n)\Vert_2 = 0$.
Noting that $u_sP_{F^c}(v_n)u_t$ is in the range of $P_{sF^ct}$ for all $n$, we obtain
\begin{align*}
\limsup_n \vert \langle u_sv_nu_t,v_n \rangle \vert & = \limsup_n \vert \langle u_sP_{F^c}(v_n)u_t,P_{F^c}(v_n)\rangle \vert\\
& = \limsup_n \vert \langle u_sP_{F^c}(v_n)u_t,P_{sF^ct}\circ P_{F^c}(v_n)\rangle \vert\\
& \leq \limsup_n \Vert P_K(v_n) \Vert_2=0,
\end{align*}
because $(v_n)_n$ tends weakly to $0$ and $K$ is finite.
\end{proof}

If $H < G$ is an inclusion satisfying the assumption of Proposition \ref{prop:condition}, then $H$ is {\it almost malnormal} in $G$ in the sense that $sHs^{-1} \cap H$ is finite for all $s \notin H$ (or equivalently $sHt \cap H$ is finite for all $s,t \notin H$). In terms of von Neumann algebras this translates as follows.

\begin{prop}
\label{propmixing}
A subgroup $H$ of a group $G$ is almost malnormal if and only if the von Neumann subalgebra $LH \subset LG$ is mixing, meaning that $\lim_n \Vert E_{LH}(av_nb)\Vert_2 = 0$, for all $a,b \in LG \ominus LH$ and for any sequence $(v_n)_n \subset \cU(LH)$ which tends weakly to $0$. 
\end{prop}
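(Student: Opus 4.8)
The plan is to prove both implications of the equivalence by translating the group-theoretic condition into a statement about Fourier coefficients.

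First I would set up notation. For $a, b \in LG \ominus LH$ and a sequence $(v_n)_n \subset \cU(LH)$ tending weakly to $0$, I want to compute $E_{LH}(av_nb)$. By linearity and density it suffices to treat $a = u_s$ and $b = u_t$ for $s, t \in G \setminus H$, and write each $v_n = \sum_{h \in H} \lambda_h^{(n)} u_h$. Then $u_s v_n u_t = \sum_h \lambda_h^{(n)} u_{sht}$, and $E_{LH}$ only keeps those terms with $sht \in H$, i.e. those $h$ with $h \in s^{-1} H t^{-1} \cap H = s^{-1}(H \cap sHt)t^{-1}$. Thus $\Vert E_{LH}(u_s v_n u_t)\Vert_2^2 = \sum_{h \in H \cap s^{-1}Ht^{-1}} \vert \lambda_h^{(n)}\vert^2$, and the relevant index set is the (finite or infinite) set $S_{s,t} := \{ h \in H : sht \in H\}$.

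For the direction that almost malnormality implies mixing, I would use that almost malnormality gives $sHt \cap H$ finite for all $s, t \notin H$ (the equivalent formulation stated in the excerpt), hence $S_{s,t}$ is a finite subset of $H$. Since $v_n \to 0$ weakly, each Fourier coefficient $\lambda_h^{(n)} = \langle v_n, u_h\rangle \to 0$, and a finite sum of such coefficients also tends to $0$; this gives $\lim_n \Vert E_{LH}(u_s v_n u_t)\Vert_2 = 0$. For the converse, I would argue contrapositively: if $H$ is not almost malnormal, there exist $s, t \notin H$ with $sHt \cap H$ infinite, so $S_{s,t}$ is an infinite subset of $H$. Enumerate it as $(h_m)_m$ and take $v_n := u_{h_n}$; these are unitaries in $LH$ tending weakly to $0$, yet $E_{LH}(u_s u_{h_n} u_t) = u_{sh_n t}$ has $\Vert \cdot \Vert_2 = 1$ for all $n$, so mixing fails.

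The main subtlety to get right is the density/linearity reduction in the ``mixing $\Rightarrow$ malnormal'' direction (or rather the bookkeeping in its converse), and making sure the weak convergence $v_n \to 0$ is used correctly: it is equivalent to $\lambda_h^{(n)} \to 0$ for each fixed $h$, which only controls \emph{finitely} many coefficients at a time, and this is exactly why the finiteness of $S_{s,t}$ is both necessary and sufficient. The counterexample $v_n = u_{h_n}$ in the converse direction is the cleanest way to see that an infinite intersection genuinely obstructs mixing, so I expect that explicit choice to carry the hard direction.
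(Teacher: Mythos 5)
Your proposal is correct and follows essentially the same route as the paper: the forward direction reduces to $a=u_s$, $b=u_t$ and uses that $K := s^{-1}Ht^{-1}\cap H$ is finite (your $S_{s,t}$, which the paper phrases via the projection $P_K$ applied to $v_n$), and the converse uses the same counterexample $v_n = u_{h_n}$ built from an infinite intersection, with the paper taking the $sHs^{-1}\cap H$ formulation and you the equivalent $sHt\cap H$ one. The only cosmetic difference is that you write out the Fourier coefficients explicitly where the paper uses the projections $P_F$.
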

\begin{proof}
Assume that $H$ is not almost malnormal inside $G$: there exists $s \in G \setminus H$ such that $sHs^{-1} \cap H$ contains a sequence going to infinity $(h_n)_n$. For all $n$ put $v_n := u_{h_n} \in \cU(LH)$ and put $a^* = b = u_s \in LG \ominus LH$. Then $LH \subset LG$ is not mixing, because the sequence $(v_n)$ goes weakly to $0$, whereas 
\[\limsup_n \Vert E_{LH}(av_nb) \Vert_2 = \limsup_n \Vert E_{LH}(u_{s^{-1}h_ns}) \Vert_2 = \limsup_n \Vert u_{s^{-1}h_ns} \Vert_2 = 1 \neq 0.\]

Conversely assume that $H$ is almost malnormal inside $G$. Take a sequence $(v_n)_n \subset \cU(LH)$ which tends weakly to $0$. If $s,t \in G \setminus H$, then $K := s^{-1}Ht^{-1} \cap H$ is finite. Hence 
\[\limsup_n \Vert E_{LH}(u_sv_nu_t)\Vert_2 = \limsup_n \Vert P_{s^{-1}Ht^{-1}}(v_n) \Vert_2 = \limsup_n \Vert P_K(v_n)\Vert_2 = 0.\]
This implies the mixing property, because $\{u_s \, , \, s \in G \setminus H\}$ spans a $\Vert \cdot \Vert_2$-dense subset of $LG \ominus LH$.
\end{proof}

\subsection{Relatively hyperbolic groups and their boundary}
\label{section:relhyp}

The contents of this section is taken from Bowditch \cite{Bo12}. Let us fix first some terminology and notations about graphs. 

Let $K$ be a connected graph. Its vertex set and edge set are denoted by $V(K)$ and $E(K)$ respectively. A {\it path} of length $n$ between two vertices $x$ and $y$ is a sequence $(x_0,x_1,\dots,x_n)$ of vertices such that $x_0 = x$ and $x_n = y$, and $(x_i,x_{i+1}) \in E(K)$ for all $i = 0,\dots, n-1$. The path $(x_0,\dots,x_n)$ is a {\it loop} if $x_0 = x_n$ and if $x_0,x_1, \dots, x_n$ are distinct. For a path $\alpha = (x_0,x_1,\dots,x_n)$, we put $\alpha(k) = x_k$, $k = 0,\dots,n$.

We endow $K$ with the distance $d$ given by the length of a shortest path between two points.
A path $\alpha$ between two vertices $x$ and $y$ is a {\it geodesic} if its length equals to $d(x,y)$. We denote by $\cf(x,y)$ the set of all geodesics between $x$ and $y$.

More generally, for $r \geq 0$, a  path $\alpha$ is an {\it $r$-quasi-geodesic} if all its vertices are distinct, and if for any finite subpath $\beta = (x_0,\dots,x_n)$ of $\alpha$, the length of $\beta$ is smaller than $d(x_0,x_n) + r$. 
Note that the geodesics are exactly the $0$-quasi-geodesics. For $x,y \in V(K)$, denote by $\cf_r(x,y)$ the set of $r$-quasi-geodesics between $x$ and $y$.

We will also consider infinite paths $(x_0,x_1,\dots)$ or bi-infinite paths $(\dots,x_{-1},x_0,x_1,\dots)$. For $r \geq 0$, such an infinite or bi-infinite path will be called $r$-quasi-geodesic if all its finite subpaths are $r$-quasi-geodesics.

In a graph $K$,  a {\it geodesic triangle} is a set of three vertices $x,y,z \in V(K)$, together with geodesic paths $[x,y] \in \cf(x,y)$, $[y,z] \in \cf(y,z)$ and $[z,x] \in \cf(z,x)$ connecting them. These paths are called the {\it sides} of the triangle.

\begin{defn}[Gromov \cite{Gr87}]
A connected graph $K$ is called {\it hyperbolic} if there exists a constant $\delta > 0$ such that every geodesic triangle in $K$ is {\it $\delta$-thin}: each side of the triangle is contained in the $\delta$-neighbourhood of the union of the other two, namely $[x,y] \subset B([y,z]\cup [z,x],\delta)$, and similarly for the other two sides.
\end{defn}

Two infinite quasi-geodesics in a hyperbolic graph $K$ are {\it equivalent} if their Hausdorff distance is finite. The {\it Gromov boundary} $\partial K$ of $K$ is the set of equivalence classes of infinite quasi-geodesics. The endpoints of a path $\alpha = (x_0,x_1,\dots)$ in a class $x$ of $\partial K$ are defined to be $x_0$ and $x$. Similarly, a bi-infinite path $\alpha = (\dots,x_{-1},x_0,x_1,\dots)$ has endpoints $\alpha_- := [(x_0,x_{-1},\dots)] \in \partial K$ and $\alpha_+ := [(x_0,x_1,\dots)] \in \partial K$. It turns out that for any two points $x,y \in K \cup \partial K$, for any $r\geq 0$, the set $\cf_r(x,y)$ of $r$-quasi-geodesics connecting them is non-empty.

Recall that a {\it hyperbolic group} is a finitely generated group
which admits a hyperbolic Cayley graph (this implies that all its
Cayley graphs are hyperbolic). We will define similarly relatively
hyperbolic groups, but we have to replace the Cayley graph by a graph
in which some subgroups are ``collapsed" to points. 

\begin{defn}[\cite{Fa98}]
Consider a group $G$, with finite generating set $S$ and denote by $\Gamma := \cay(G,S)$ the associated Cayley graph. Let $\rg$ be a collection of subgroups of $G$. The {\it coned-off graph of $\Gamma$ with respect to $\rg$} is the graph $\hat{\Gamma}$ with:
\begin{itemize}
\item vertex set $V(\hat{\Gamma}) := V(\Gamma) \sqcup \bigsqcup_{H \in \rg} G/H$;
\item edge set $E(\hat{\Gamma}) := E(\Gamma) \sqcup \{(gh,[gH])\, \vert \, H \in \rg, [gH] \in G/H, h \in H\}$.
\end{itemize}
\end{defn}

In the sequel, we will identify $V(\Gamma)$ with $G$. The action of $G$ on itself by left multiplication extends to an isometric action on $\hat\Gamma$. The stabilizer of the vertex $[gH]$ is equal to $gHg^{-1}$.

Note that in the coned-off graph $\hat{\Gamma}$, the distance between two elements $g$ and $gh$ is at most $2$ whenever $h \in H$ for some $H \in \rg$. Note also that this coned-off graph will not be locally finite in general. But it will sometimes satisfy the following {\it fineness} condition. 

\begin{defn}[\cite{Bo12}]
\label{fine}
A graph $\Gamma$ is called {\it fine} if each edge of $\Gamma$ is contained in only finitely many loops of length $n$, for any given integer $n$.
\end{defn}

\begin{defn}[\cite{Bo12}]
  A group $G$ is said to be {\it hyperbolic relative to the
  family $\rg$} if there exists a finite generating set $S$ of $G$ such that
  the coned-off graph $\hat\Gamma$ is fine and $\delta$-hyperbolic
  (for some $\delta \geq 0$). 
\end{defn}

From this definition, usual hyperbolic groups appear as hyperbolic relative to the empty family.
For any relatively hyperbolic group $G$ with Cayley graph $\Gamma$, let us define a topology on $\Delta \Gamma :=\hat\Gamma\cup\partial\hat\Gamma$. 

\begin{defn}
  Given $x\in \Delta\Gamma$ and a finite set $A \subset
  V(\hat\Gamma)$ such that $x \notin A$, we define \[M(x,A):=\left\{y\in\Delta\Gamma : A \cap
    \alpha = \emptyset, \forall \alpha\in \cf(x,y)\right\}.\]
\end{defn}

\begin{thm}[\cite{Bo12}, section $8$]\label{topology}
The family $\{M(x,A)\}_{x,A}$ is a basis for a Hausdorff compact
topology on $\Delta\Gamma$ such that $G\subset \Delta\Gamma$ is
a dense subset, and every graph automorphism of $\hat\Gamma$ extends to a homeomorphism of $\Delta\Gamma$.
\end{thm}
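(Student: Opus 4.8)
The plan is to establish the five assertions---basis, Hausdorff, compactness, density of $G$, and equivariance under automorphisms---in roughly that order, with compactness as the crux. The two structural inputs are $\delta$-hyperbolicity, which via thin triangles yields the stability of (quasi-)geodesics and makes the boundary $\partial\hat\Gamma$ well behaved, and fineness, which compensates for the failure of $\hat\Gamma$ to be locally finite. The single most useful preliminary fact I would isolate is that fineness forces $\bigcup\cf(x,y)$ to be a finite subgraph for any two vertices $x,y$: two distinct geodesics of length $d(x,y)$ fit together into cycles of length at most $2d(x,y)$, and each edge lies in only finitely many such loops. Combined with thinness, this supplies the local control needed even though cone-vertices have infinite degree, and it is the substitute for K\"onig's lemma.

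For the basis axioms, covering is immediate since $y\in M(y,A)$ whenever $y\notin A$ (the only geodesic from $y$ to itself is trivial). For the intersection axiom I would, given $y\in M(x_1,A_1)\cap M(x_2,A_2)$, exhibit a finite set $A_3$ with $y\in M(y,A_3)\subset M(x_1,A_1)\cap M(x_2,A_2)$. The idea is to choose $A_3$ large enough to ``shield'' $y$ from $A_1\cup A_2$, so that for $z\in M(y,A_3)$ any geodesic $[x_i,z]$ fellow-travels the concatenation of $[x_i,y]$ with $[y,z]$ by thinness; since $[x_i,y]$ avoids $A_i$ (as $y\in M(x_i,A_i)$) and $[y,z]$ avoids $A_i$ by the choice of $A_3$, one deduces $z\in M(x_i,A_i)$. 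Turning the fellow-traveling estimate into an exact avoidance is where the finiteness lemma and a $\delta$-neighbourhood argument enter. For Hausdorff separation of distinct $x,y$, I would pick a finite set $A$ meeting every geodesic in $\cf(x,y)$ away from the endpoints; a common point $z\in M(x,A')\cap M(y,A')$ for a suitable enlargement $A'$ would produce geodesics $[x,z]$ and $[y,z]$ avoiding $A'$, and thinness would force some geodesic in $\cf(x,y)$ to avoid $A$, a contradiction.

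Density of $G=V(\Gamma)$ follows by noting that a nonempty $M(x,A)$ with $A$ finite must contain a group element: approaching $x$ along a geodesic ray (if $x\in\partial\hat\Gamma$) or among the vertices near $x$ (if $x\in\hat\Gamma$), only finitely many can be blocked by the finite set $A$, so some $g\in V(\Gamma)$ admits a geodesic $[x,g]$ missing $A$. For equivariance, a graph automorphism $\phi$ of $\hat\Gamma$ is an isometry, hence permutes geodesics and extends to $\partial\hat\Gamma$ by $[\alpha]\mapsto[\phi\circ\alpha]$; one then checks directly that $\phi(M(x,A))=M(\phi(x),\phi(A))$, so $\phi$ permutes the basis and is a homeomorphism.

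Compactness is where I expect the real work to lie, precisely because $\hat\Gamma$ is not locally finite and K\"onig's lemma does not apply directly. The plan is to fix a basepoint $o\in G$ and show that every net in $\Delta\Gamma$ admits a convergent subnet. Using the finiteness lemma, at each radius $n$ only finitely many vertices arise as the distance-$n$ point of a geodesic from $o$, so a diagonal argument extracts a subnet whose geodesics from $o$ agree on longer and longer initial segments. Their limit is either a vertex or a quasi-geodesic ray, giving a candidate limit point $x\in\Delta\Gamma$, and the last---and most delicate---step is to verify that this is indeed the limit in the $M(x,A)$-topology and to handle the case of a boundary basepoint; this is exactly where fineness must be used in full strength.
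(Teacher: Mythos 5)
First, a point of comparison: the paper does not prove Theorem \ref{topology} at all. It is quoted from Bowditch \cite{Bo12}, Section 8, and the authors explicitly remark that they will not even use compactness; the only ingredients they extract from Bowditch's work are the finiteness statements of Lemma \ref{lem:key}, from which they derive Corollary \ref{cor:bigA} and Lemma \ref{uniformbasis}. So your proposal cannot be measured against an argument in the paper; it has to stand on its own as a reconstruction of Bowditch's proof. Your treatment of the basis axioms (your ``shielding'' set $A_3$ is essentially Lemma \ref{uniformbasis} of the paper) and of equivariance is sound in outline, but two steps do not survive scrutiny.

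The serious one is compactness. Your extraction argument rests on the claim that ``at each radius $n$ only finitely many vertices arise as the distance-$n$ point of a geodesic from $o$.'' This is false in $\hat\Gamma$. Fineness, via Lemma \ref{lem:key}(1), makes the union $\bigcup_{\alpha\in\cf_r(x,y)}\alpha$ locally finite only when \emph{both} endpoints $x,y$ are fixed; it gives no control over the union of all geodesics emanating from a single basepoint. Concretely, if $o\in G$ and $H\in\rg$ is infinite, every element of the coset $oH$ lies at distance at most $2$ from $o$ (through the cone vertex $[oH]$), and each one is the terminal, hence distance-$2$, vertex of a geodesic from $o$; so infinitely many vertices already arise at radius $2$, and the K\"onig-type diagonal extraction has nothing finite to run on. This failure of local finiteness at the cone vertices is exactly what makes compactness the hard part of \cite{Bo12}, and it cannot be patched by the finiteness lemma alone. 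A smaller gap concerns density: for $x\in\partial\hat\Gamma$ you produce a group element $g$ and \emph{one} geodesic from $g$ to $x$ missing $A$, but $g\in M(x,A)$ requires \emph{every} geodesic in $\cf(x,g)$ to miss $A$. This one is repairable with the tools at hand: if $g=\alpha(n)$ for a geodesic ray $\alpha$ representing $x$, and some geodesic from $g$ to $x$ met a point $a\in A$, then by Lemma \ref{lem:key}(3) the tail of $\alpha$ beyond $\alpha(n)$ would have to meet the finite set $V_{0,x}(a)$, which it avoids once $n$ is large because a geodesic ray has distinct vertices. The compactness step, by contrast, needs a genuinely different idea than the one you propose.
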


Actually, we will not use the fact that $\Delta \Gamma$ is compact. The proof of Theorem \ref{topology} relies on the following lemma, which will be our main tool in order to manipulate neighbourhoods in $\Delta \Gamma$.

\begin{lem}[\cite{Bo12}, Section 8]\label{lem:key}
Let $r \geq 0$. The following facts are true.
\begin{enumerate}
\item For every $x,y\in \Delta\Gamma$, the graph $\bigcup_{\alpha \in \cf_r(x,y)} \alpha$ is locally finite.
\item For every edge $e\in E(\hat\Gamma)$, there exists a finite set
$E_r(e)\subset E(\hat\Gamma)$ such that for all $x,y \in
\Delta\Gamma$, and all $\alpha,\beta\in \cf_r(x,y)$ with $e \in
\alpha$, we have that $E_r(e) \cap \beta$ contains at least one edge.
\item For every $a\in V(\hat\Gamma), x \in \Delta \Gamma$, with $x \neq a$, there exists a finite set $V_{r,x}(a)\subset V(\hat\Gamma) \setminus \{ x \}$ such that for all $y \in \Delta \Gamma$, and all $\alpha,\beta\in \cf_r(x,y)$ with $a \in \alpha$, we have that $\beta \cap V_{r,x}(a) \neq \emptyset$.
\end{enumerate} 
\end{lem}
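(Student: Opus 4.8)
The plan is to combine two ingredients that become available once $\hat\Gamma$ is fine and $\delta$-hyperbolic: the stability of quasi-geodesics (a Morse-type lemma) and the fineness condition itself. The stability statement I would first record is that there is a constant $R = R(\delta,r)$, depending only on $\delta$ and $r$, such that any $\alpha \in \cf_r(x,y)$ lies in the $R$-neighbourhood of any geodesic joining $x$ to $y$ and conversely; consequently any two members of $\cf_r(x,y)$ have Hausdorff distance at most $2R$. This is the usual divergence argument for $\delta$-thin triangles, and it applies here because the defining inequality of an $r$-quasi-geodesic makes each such path a $(1,r)$-quasi-isometric embedding on subpaths. For endpoints in $\partial\hat\Gamma$ I would reduce to the finite case by applying stability to long finite subsegments and passing to a limit. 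The role of stability is to confine every competing quasi-geodesic to a bounded neighbourhood of a fixed one; the role of fineness is then to upgrade ``bounded neighbourhood'' to ``finitely many edges/vertices'', since fineness says exactly that only finitely many simple loops of each given length pass through a fixed edge.

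For (2), fix $e=(u,v) \in \alpha$ and another $\beta \in \cf_r(x,y)$. By stability there are vertices $q,q' \in \beta$ with $d(u,q) \le 2R$ and $d(v,q') \le 2R$, and the subpath $\beta[q,q']$ has length at most $d(q,q')+r \le 4R+1+r$. Concatenating $e$, a geodesic $[v,q']$, the arc $\beta[q',q]$, and a geodesic $[q,u]$ produces a closed walk of length bounded by a constant $L=L(\delta,r)$ that runs through $e$ and through an edge of $\beta$. I would then extract from it a genuine simple loop through $e$ carrying an edge of $\beta$, and set $E_r(e)$ to be the (finite, by fineness) set of all edges lying on simple loops of length at most $L$ through $e$; this set depends only on $e$ and $r$, uniformly in the endpoints, which is exactly what the statement demands. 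The proof of (1) is parallel: after fixing a reference $\alpha_0 \in \cf_r(x,y)$, stability puts every vertex of $\bigcup_{\alpha} \alpha$ within $2R$ of $\alpha_0$, and for a vertex $a$ near a fixed edge $e_0$ of $\alpha_0$ the same loop construction exhibits every quasi-geodesic edge at $a$ on a bounded simple loop through $e_0$; fineness applied to $e_0$ then bounds the degree of $a$.

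Once (1) and (2) are in hand, (3) follows by a purely combinatorial argument needing no new geometry. Given $a \in \alpha \in \cf_r(x,y)$ with $a \neq x$, the initial segment $\alpha[x,a]$ is an $r$-quasi-geodesic from $x$ to $a$, so its final edge $e$ at $a$ lies in the locally finite graph $\bigcup_{\gamma \in \cf_r(x,a)} \gamma$ provided by (1); hence $e$ ranges over a finite set $\{e_1,\dots,e_k\}$ of edges at $a$ depending only on $x$ and $a$. Since $e$ is also an edge of $\alpha \in \cf_r(x,y)$, part (2) forces $\beta$ to meet the finite edge set $E_r(e) \subset \bigcup_i E_r(e_i)$; taking $V_{r,x}(a)$ to be the endpoints (other than $x$) of the edges in $\bigcup_i E_r(e_i)$ gives the required finite set, uniformly in $y$.

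The main obstacle I anticipate is twofold and concentrated in the loop arguments for (1) and (2). First, fineness counts only \emph{simple} loops, whereas the construction naturally yields a closed walk; a figure-eight shows that a bounded closed walk through two edges need not contain a simple loop through both, so I would have to use that the walk is a concatenation of boundedly many simple paths (two geodesics, one quasi-geodesic arc, one edge) and reroute carefully at self-intersections to produce an honest simple loop through $e$ carrying an edge of $\beta$, absorbing the finitely many degenerate configurations (for instance when $q=q'$, or when a bridge reuses $e$) into $E_r(e)$ directly. Second, establishing the stability constant for the specific combinatorial notion of $r$-quasi-geodesic and extending it to ideal endpoints in $\partial\hat\Gamma$ requires the exhaustion/limiting argument above, where the local finiteness from part (1) is precisely what guarantees that the limiting quasi-geodesics exist.
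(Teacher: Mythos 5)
Your treatment of part (3) is exactly the paper's own argument: the paper's proof consists solely of that reduction --- the final edge $e$ of the sub-quasi-geodesic $\alpha[x,a]$ lies in the finite set of edges at $a$ of the locally finite graph $\bigcup_{\gamma\in\cf_r(a,x)}\gamma$ given by (1), and then (2) applied to $e$ forces $\beta$ to meet a fixed finite edge set --- while parts (1) and (2) are simply cited from Bowditch (Lemmas 8.2 and 8.3 of \cite{Bo12}). So you are attempting strictly more than the paper does, and the extra part is where the gap lies.

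The gap is the loop-extraction step in your proofs of (1) and (2), and it is not a matter of ``absorbing finitely many degenerate configurations.'' Concretely, nothing in your construction prevents the two connecting geodesics $[v,q']$ and $[q,u]$ from passing through a common vertex $w$. In that case the edge set of your closed walk is a figure-eight with cut vertex $w$: its only simple loops are $u,v,w,u$ (which contains $e$ but no edge of $\beta$) and $w,q',\dots,q,w$ (which contains edges of $\beta$ but not $e$). No rerouting \emph{within the walk} can produce a simple loop through $e$ carrying an edge of $\beta$, because no such loop exists in that edge set; your argument then certifies nothing, since fineness only counts simple loops. The natural repair --- close up $\alpha$ and $\beta$ along their last and first intersection points on either side of $e$ to get an honest simple bigon through $e$ meeting $\beta$ --- fails for a different reason: such bigons have length that is \emph{not} bounded in terms of $\delta$ and $r$ (in the fine hyperbolic ``ladder'' graph, the two sides of a length-$n$ ladder are geodesics with the same endpoints bounding a simple cycle of length $2n+2$), so fineness cannot be applied to them. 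This is exactly why Bowditch's proofs run through a different mechanism: \emph{taut cycles}, i.e.\ cycles all of whose subpaths of at most half the total length are $r$-taut. Hyperbolicity bounds the length of such cycles by a four-point/divergence argument, fineness then makes them finite in number through each edge, and the comparison of two quasi-geodesics is organized around these cycles rather than around surgery on an ad hoc closed walk. Without that (or an equivalent global idea), your proofs of (1) and (2) do not go through. A secondary problem: your extension to ideal endpoints produces limiting quasi-geodesics by invoking the local finiteness of part (1), which is the statement being proved, so for $x,y\in\partial\hat\Gamma$ the argument as sketched is circular.
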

\begin{proof} 
  The first two facts are Lemma 8.2 and Lemma 8.3 in \cite{Bo12}. To derive the
  third fact from the others, fix $a \in V(\hat\Gamma)$ and $x \in
  \Delta\Gamma$. Denote by $E_0$ the set of edges $e$ in the graph $\bigcup_{\alpha \in \cf_r(a,x)} \alpha$ such that
  $a$ is an endpoint of $e$. By (1), the set $E_0$ is
  finite. Now put $E := \bigcup_{e \in E_0} E_r(e)$, and define
  $V_{r,x}(a)$ to be the set of endpoints of $E$, from which we remove $x$ if necessary. This is a finite set.

  Now if $\alpha \in \cf_r(x,y)$ goes through $a$, then it will contain
  an edge in $E_0$. Thus any $\beta \in \cf_r(x,y)$ contains an edge in
  $E$, and we are done by the definition of $V_{r,x}(a)$.
\end{proof}

Lemma \ref{lem:key} will always be used via the following easy corollary.

\begin{cor}\label{cor:bigA}
Let $r > 0$, $x \in \Delta \Gamma$ and $A \subset V(\hat{\Gamma}) \setminus \{x \}$ finite. Then there exists a set $V_{r,x}(A)$ containing $A$ such that,
\begin{enumerate}
\item if $y \notin M(x,A)$, then any $r$-quasi-geodesic $\alpha \in \cf_r(x,y)$ intersects $V_{r,x}(A)$,
\item if $y \in M(x,V_{r,x}(A))$, then no $r$-quasi-geodesic from $y$ to $x$ intersect $A$. 
\end{enumerate}
\end{cor}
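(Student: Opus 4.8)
The plan is to build $V_{r,x}(A)$ by amalgamating the finite sets that Lemma \ref{lem:key}(3) produces at each point of $A$. Concretely, for each $a \in A$ let $V_{r,x}(a) \subset V(\hat\Gamma) \setminus \{x\}$ be the finite set given by the third item of Lemma \ref{lem:key}, and set
\[ V_{r,x}(A) := A \cup \bigcup_{a \in A} V_{r,x}(a). \]
Since $A$ is finite and each $V_{r,x}(a)$ is finite, this is a finite subset of $V(\hat\Gamma)$; it contains $A$ and avoids $x$ by construction. The pivotal observation used throughout is that the $r$-quasi-geodesic condition only weakens as $r$ grows, so $\cf(x,y) = \cf_0(x,y) \subset \cf_r(x,y)$ for $r > 0$. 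This inclusion is what lets me pass freely between the genuine geodesics that define the neighbourhoods $M(x,\cdot)$ and the $r$-quasi-geodesics that Lemma \ref{lem:key} controls.

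For item (1), I would start from $y \notin M(x,A)$. By definition of $M(x,A)$ there is a geodesic $\gamma \in \cf(x,y)$ meeting $A$, say at a vertex $a \in A$. Because $r > 0$, this $\gamma$ lies in $\cf_r(x,y)$ and passes through $a$, so Lemma \ref{lem:key}(3) applied at $a$ forces every $\beta \in \cf_r(x,y)$ to meet $V_{r,x}(a)$, hence to meet the larger set $V_{r,x}(A)$. That is exactly the conclusion of (1).

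For item (2), I would argue by contraposition. An $r$-quasi-geodesic from $y$ to $x$ becomes, after reversal, an element of $\cf_r(x,y)$, and meeting $A$ is preserved under reversal; so suppose some $\alpha \in \cf_r(x,y)$ passes through a vertex $a \in A$. Applying Lemma \ref{lem:key}(3) at $a$ once more, every path in $\cf_r(x,y)$ meets $V_{r,x}(a) \subset V_{r,x}(A)$ — in particular every genuine geodesic $\gamma \in \cf(x,y) \subset \cf_r(x,y)$ does. Thus no geodesic from $x$ to $y$ avoids $V_{r,x}(A)$, i.e.\ $y \notin M(x, V_{r,x}(A))$, which is precisely the contrapositive of (2).

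I do not expect a serious geometric obstacle here: the corollary is a formal repackaging of Lemma \ref{lem:key} and uses no new input about $\hat\Gamma$. The only point demanding care is the bookkeeping among the three families of paths at play — the geodesics defining $M(x,\cdot)$, the $r$-quasi-geodesics appearing in the conclusions, and the auxiliary quasi-geodesics $\cf_r(a,x)$ buried in the proof of Lemma \ref{lem:key}(3). Keeping the quantifiers aligned (``\emph{there exists} a geodesic through $A$'' in the hypothesis of (1) versus ``\emph{every} $r$-quasi-geodesic meets $V_{r,x}(A)$'' in its conclusion) and remembering to throw $A$ itself into $V_{r,x}(A)$ so that the containment $A \subset V_{r,x}(A)$ is guaranteed are the details most easily overlooked.
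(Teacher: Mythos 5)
Your proof is correct and takes exactly the paper's approach: the paper's entire proof is to set $V_{r,x}(A):=A\cup \bigcup_{a\in A} V_{r,x}(a)$ and assert that it ``does the job.'' Your verification of the two items—using the inclusion $\cf(x,y)\subset\cf_r(x,y)$ to feed geodesics into Lemma \ref{lem:key}(3), and arguing item (2) by contraposition—is precisely the routine checking the paper leaves to the reader.
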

\begin{proof}
  The set $V_{r,x}(A):=A\cup \bigcup_{a\in A} V_{r,x}(a)$ does the job. 
\end{proof}

Now we describe a way of constructing quasi-geodesic paths. The following lemma is well known but we include a proof for convenience. 

\begin{lem}\label{concatenation}
  There exist a constant $r_0 \geq 0$, only depending on the hyperbolicity constant of the graph $\hat \Gamma$, with the following property: for any geodesic paths $\alpha, \beta$ sharing exactly one endpoint $a$, if $a$ is the closest point of $\alpha$ to each point of $\beta$, then $\alpha \cup \beta$ is an $r_0$-quasi-geodesic.
\end{lem}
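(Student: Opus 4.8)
The plan is to verify the two defining properties of an $r_0$-quasi-geodesic directly for $\gamma := \alpha \cup \beta$, where I write $\alpha$ as a geodesic ending at $a$ and $\beta$ as a geodesic starting at $a$. Distinctness of the vertices of $\gamma$ is immediate from the hypothesis: if some vertex $y \neq a$ lay on both $\alpha$ and $\beta$, then $y$ itself would be a point of $\alpha$ at distance $0$ from $y$, contradicting that $a$, which is at positive distance $d(a,y) > 0$, is the closest point of $\alpha$ to $y$. For the length condition, I would split an arbitrary finite subpath of $\gamma$ into three cases. A subpath lying entirely inside $\alpha$ or entirely inside $\beta$ is itself a geodesic, so its length equals the distance between its endpoints and the condition holds trivially. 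Thus the only case requiring work is a \emph{straddling} subpath, running from a vertex $x \in \alpha$ through $a$ to a vertex $y \in \beta$; since $\alpha$ and $\beta$ are geodesics, its length is exactly $d(x,a) + d(a,y)$, so it suffices to produce a constant $r_0 = r_0(\delta)$ with $d(x,a) + d(a,y) \leq d(x,y) + r_0$.

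The core estimate would use thin triangles to show that the Gromov product $(x\,|\,y)_a := \tfrac12\bigl(d(x,a) + d(y,a) - d(x,y)\bigr)$ is bounded by $O(\delta)$. Consider the geodesic triangle with vertices $a, x, y$ and sides $[a,x] \subset \alpha$, $[a,y] \subset \beta$, and a geodesic $[x,y]$. Let $p$ be the internal point on $[a,x]$ at distance $(x\,|\,y)_a$ from $a$, and $p'$ the internal point on $[a,y]$ at the same distance from $a$. Standard $\delta$-thin-triangle (insize) estimates give a constant $c = c(\delta)$ — concretely one may take $c = 4\delta$ up to the usual conventions — with $d(p,p') \leq c$. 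A direct computation shows $d(p',y) = (a\,|\,x)_y$, whence $d(p,y) \leq d(p,p') + d(p',y) \leq (a\,|\,x)_y + c$.

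Now I would deploy the nearest-point hypothesis, which is exactly what forces the Gromov product to stay bounded. Since $p \in \alpha$ and $a$ is the closest point of $\alpha$ to $y$, we have $d(a,y) \leq d(p,y) \leq (a\,|\,x)_y + c$. Expanding $(a\,|\,x)_y = \tfrac12\bigl(d(a,y) + d(x,y) - d(a,x)\bigr)$ and rearranging yields $d(x,a) + d(a,y) \leq d(x,y) + 2c$. Hence $r_0 := 2c$ (enlarged slightly, e.g. to $8\delta + 1$, to make the defining inequality strict) depends only on the hyperbolicity constant of $\hat\Gamma$ and witnesses that $\gamma$ is an $r_0$-quasi-geodesic.

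The one genuinely substantive step is the coarse-tree estimate $d(p,p') \le c(\delta)$: converting the exact tree picture, in which the nearest-point condition would make $\alpha \cup \beta$ an honest geodesic, into the $\delta$-hyperbolic setting with a controlled constant. Everything else is bookkeeping with the triangle inequality and the Gromov product. The conceptual point worth emphasizing is that without the nearest-point hypothesis the product $(x\,|\,y)_a$ could be arbitrarily large (a concatenation that backtracks), and it is precisely this hypothesis that pins it down to $O(\delta)$.
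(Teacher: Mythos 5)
Your proposal is correct and takes essentially the same route as the paper: both apply $\delta$-thinness to the geodesic triangle $\{x,a,y\}$ and use the nearest-point hypothesis to bound the defect $d(x,a)+d(a,y)-d(x,y)$ by $O(\delta)$, giving $r_0 = 8\delta + O(1)$. The only difference is bookkeeping: the paper extracts a transition vertex $z\in[x,y]$ with companions $u\in\alpha$, $v\in\beta$ at distance at most $\delta+1$ and applies the hypothesis to $v$ (getting $r_0=8\delta+8$ directly from the slim-triangle definition), whereas you work with internal (tripod) points and invoke the standard insize estimate $d(p,p')\le c(\delta)$ — a legitimate, well-known equivalence — and apply the hypothesis at the endpoint $y$; you also handle the vertex-distinctness clause of the definition, which the paper leaves implicit.
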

\begin{proof}
  We can take $r_0=8\delta+8$. Indeed, given $x\in \alpha$ and $y\in\beta$, we denote with $[x,y]\in\cf(x,y)$ a geodesic between $x$ and $y$. Since the triangle $\{x,y,a\}$ is $\delta$-thin, there exists $z\in [x,y]$ and $u\in\alpha$, $v\in\beta$ such that $d(z,u)\leq\delta+1$ and $d(z,v)\leq\delta+1$. By hypothesis on $a$, we have that $d(u,a)\leq 2\delta+2$ and hence $d(v,a)\leq 4\delta +4$. Finally, 
  \begin{align*}
    d(x,a)+d(y,b)\leq& (d(x,z)+d(z,u)+d(u,a))+(d(a,v)+d(v,z)+d(z,y))\\
    \leq& d(x,z)+d(y,z)+8\delta+8=d(x,y)+8\delta+8.\qedhere
  \end{align*}
\end{proof}

\begin{defn} \label{perpendicular}
  Consider $x,y,z \in \Delta \Gamma$ and let $\alpha \in \cf(x,y)$ be a geodesic. A point $z_0 \in \alpha$ which minimizes the distance from $z$ to $\alpha$, is called a {\it projection of $z$ on $\alpha$}. Such a $z_0$ splits the path $\alpha$ into two geodesic paths $\alpha_x \in \cf(x,z_0)$ and $\alpha_y \in \cf(z_0,y)$. Given any geodesic $\beta\in\cf(z,z_0)$, we can join $\beta$ and $\alpha_x$ or $\alpha_y$ to get two paths that are $r_0$-quasi geodesic by Lemma \ref{concatenation}. 
\end{defn}

We end this section with a lemma that we will need later and which is essentially contained in Section 8 of \cite{Bo12}. Its proof illustrates well how to use the tools introduced above.

\begin{lem}
\label{uniformbasis}
  For every $x \in\Delta\Gamma$ and for every finite subset $A\subset V(\hat\Gamma)\setminus \{x \}$, there exists a finite subset $C \subset V(\hat\Gamma)\setminus \{ x \}$ such that for every $y\in M(x,C)$, \[M(y,C)\subset M(x,A).\] 
\end{lem}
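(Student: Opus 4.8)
The plan is to prove the statement in contrapositive form. Fix $x$ and the finite set $A$, and suppose we have already chosen a finite $C$ (to be specified). Given $y \in M(x,C)$ and $z \in M(y,C)$, I want to show $z \in M(x,A)$, i.e.\ that \emph{every} geodesic $\gamma \in \cf(x,z)$ avoids $A$. So I fix such a $\gamma$ and assume for contradiction that it passes through some $a \in A$. The crucial observation is that, by the very definition of the sets $M(\cdot,\cdot)$, the hypotheses $y \in M(x,C)$ and $z \in M(y,C)$ mean that \emph{no} geodesic in $\cf(x,y)$, and \emph{no} geodesic in $\cf(y,z)$, meets $C$. Hence it suffices to produce a single geodesic from $x$ to $y$, or from $y$ to $z$, that meets $C$.

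First I would project $y$ onto $\gamma$, obtaining a point $y_0$ splitting $\gamma$ into $[x,y_0]$ and $[y_0,z]$. By Definition \ref{perpendicular} and Lemma \ref{concatenation}, the concatenations $\sigma := [x,y_0] \cup [y_0,y]$ and $\tau := [y,y_0] \cup [y_0,z]$ are $r_0$-quasi-geodesics, from $x$ to $y$ and from $y$ to $z$ respectively. Since $a$ lies on $\gamma = [x,y_0] \cup [y_0,z]$, we have either $a \in [x,y_0] \subset \sigma$ or $a \in [y_0,z] \subset \tau$; in either case the edge $e_a$ of $\gamma$ incident to $a$ and pointing towards $x$ belongs to the corresponding quasi-geodesic.

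The step I expect to be the main obstacle is converting this into a meeting with a finite set that does \emph{not} depend on $y$ and $z$. The naive approach, applying Lemma \ref{lem:key}(3) with base point $y$ to the quasi-geodesic $\tau$ through $a$, only yields a finite set $V_{r_0,y}(a)$ depending on the moving base point $y$, which cannot be absorbed into a single $C$. To get around this I would work instead with the edge $e_a$, which is anchored near $x$: it lies on the geodesic $[x,a] \subset \gamma$, so by Lemma \ref{lem:key}(1) applied to $\cf_{r_0}(x,a)$ the edges incident to $a$ on $r_0$-quasi-geodesics from $x$ to $a$ form a finite set; letting $a$ range over the finite set $A$, the edge $e_a$ thus ranges over a finite set $\mathcal{E}$ of edges depending only on $x$ and $A$. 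Now Lemma \ref{lem:key}(2) can be applied with matching endpoints: in the first case $e_a \in \sigma \in \cf_{r_0}(x,y)$ forces every geodesic in $\cf(x,y)$ to contain an edge of $E_{r_0}(e_a)$, and in the second case $e_a \in \tau \in \cf_{r_0}(y,z)$ forces every geodesic in $\cf(y,z)$ to contain an edge of $E_{r_0}(e_a)$. I would therefore set $C$ to be the finite set of endpoints of all edges in $\bigcup_{e \in \mathcal{E}} E_{r_0}(e)$, with $x$ deleted. Then the first case produces a geodesic from $x$ to $y$ meeting $C$, contradicting $y \in M(x,C)$, and the second produces a geodesic from $y$ to $z$ meeting $C$, contradicting $z \in M(y,C)$; so $\gamma$ cannot meet $A$, as desired. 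The only routine points left are to verify the hypothesis of Lemma \ref{concatenation} for $\sigma$ and $\tau$ (namely that $y_0$ is the closest point of $\gamma$ not just to $y$ but to every point of $[y,y_0]$, which follows from the triangle inequality) and to dispose of the harmless boundary cases where a forced intersection vertex coincides with $x$, $y$ or $z$.
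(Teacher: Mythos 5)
Your proof is correct, but it takes a genuinely different route from the paper's. Both arguments share the same skeleton (a contrapositive, one projection, Lemma \ref{concatenation}, and fineness), but the decomposition differs: the paper fixes $y \in M(x,C)$ and $z \notin M(x,A)$ and projects the \emph{fixed} point $x$ onto a geodesic $\alpha \in \cf(y,z)$, so that the two resulting $r_0$-quasi-geodesics both emanate from $x$; consequently the pre-packaged, vertex-based Corollary \ref{cor:bigA} (i.e.\ Lemma \ref{lem:key}(3)) with base point $x$ applies to both of them, the moving-endpoint problem you describe never arises, and $C := V_{r_0,x}(V_{r_0,x}(A))$ finishes the proof in a few lines. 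You instead project the \emph{moving} point $y$ onto a geodesic $\gamma \in \cf(x,z)$, which creates exactly the obstacle you identify: the quasi-geodesic $\tau$ from $y$ to $z$ has no fixed endpoint, so Lemma \ref{lem:key}(3) cannot be applied to it. Your fix --- confining the edge $e_a$ to a finite set $\mathcal{E}$ via local finiteness (Lemma \ref{lem:key}(1)) and then exploiting that the set $E_{r_0}(e)$ of Lemma \ref{lem:key}(2) is independent of the endpoints --- is sound, and is in effect an inline re-derivation of Lemma \ref{lem:key}(3) adapted to moving endpoints; this is precisely how the paper itself deduces part (3) from parts (1) and (2). The delicate points all check out: the hypothesis of Lemma \ref{concatenation} for $\sigma$ and $\tau$ follows from the triangle inequality as you say (for $w \in [y,y_0]$ and $v \in \gamma$ one has $d(w,v) \geq d(y,y_0) - d(y,w) = d(w,y_0)$); the edge $e_a$ lies on $\sigma$ or on $\tau$ in the respective cases (with the overlap $a = y_0$ assigned to the first case, and $e_a$ well defined since $a \neq x$); and since every edge of $E_{r_0}(e_a)$ has two distinct endpoints, at least one endpoint survives the deletion of $x$ and lies in $C$, so the forced intersection with $C$ is genuine. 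The trade-off: the paper's choice of which point to project yields a shorter proof and a cleaner description of $C$, while your route is slightly longer but makes explicit the endpoint-independence mechanism that Lemma \ref{lem:key}(3) otherwise hides.
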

\begin{proof}
  Let $r_0 \geq 0$ be given by Lemma \ref{concatenation}, and set $C:=V_{r_0,x}(V_{r_0,x}(A))$ (see Corollary \ref{cor:bigA}). We will show that the conclusion of the lemma holds for this $C$.

If $y = x$, we see  that $M(x,C) \subset M(x,A)$ because $A \subset V_{r_0,x}(A) \subset C$.
Now let $y \in M(x,C)$, with $y \neq x$, and take $z \notin M(x,A)$.
We will show that $z \notin M(y,C)$.

Let $\alpha$ be a geodesic between $y$ and $z$. Consider a projection $x_0$ of $x$ on $\alpha$ as in Definition \ref{perpendicular} and let $\beta \in \cf(x,x_0)$. We denote with $\alpha_y$ (resp. $\alpha_z$) the subgeodesic of $\alpha$ between $x_0$ and $y$ (resp. $x_0$ and $z$). 
Then, by Lemma \ref{concatenation} the paths $\beta \cup \alpha_y \in \cf_{r_0}(x,y)$ and $\beta \cup \alpha_z \in \cf_{r_0}(x,z)$ are $r_0$-quasi-geodesics.

Since $z\notin M(x,A)$, Corollary \ref{cor:bigA}(1) implies that $\beta \cup \alpha_z$ intersects $V_{r_0,x}(A)$. If the intersection point lied on $\beta \cup \alpha_y$, then Corollary \ref{cor:bigA}(2) would contradict our assumption that $y \in M(x,C)$. Hence the intersection point lies on $\alpha_z \subset \alpha$. We have found a geodesic between $z$ and $y$ which intersects a point of $V_{r_0,x}(A) \subset C$, which means precisely that $z \notin M(y,C)$.
\end{proof}


\section{Hyperbolic case: proof of Theorem A}
\label{proofA}

Suppose that $G$ is a hyperbolic group and that $H$ is an infinite
maximal amenable subgroup of $G$. We want to apply Proposition \ref{prop:condition} in order to prove Theorem A.

As mentioned in Section \ref{section:relhyp}, $G$ is hyperbolic relative to the empty family and $\hat\Gamma = \Gamma$, for any Cayley graph $\Gamma$ of $G$. Thus $\Delta \Gamma := \Gamma \cup \partial \Gamma$ is the usual Gromov compactification of $\Gamma$, with boundary $\partial \Gamma$, endowed with the topology generated by the sets $\{M(x,A)\}_{x,A}$. As before, we identify $G$ with $V(\Gamma)$.

Recall that the action of $G$ by left multiplication on itself extends to a continuous action on $\Delta\Gamma$.

The amenable subgroup $H$ has a particular form by \cite[Théorème
8.29, Théorème 8.37]{GdH90}. First, $H$ admits an element $a \in H$
of infinite order which generates a finite index subgroup of $H$. Second, the element
$a$ acts on $\Delta \Gamma$ with exactly two fixed points $a_\pm
\in \partial\Gamma$, and $H \subset \Stab_G(\{ a_-,a_+ \})$. By maximal amenability of $H$ and since $\Stab_G(\{ a_-,a_+ \})$ is virtually cyclic, we have the equality $H = \Stab_G(\{ a_-,a_+\})$. Moreover the stabilizers of $a_-$ and $a_+$ are equal \cite[Théorème 8.30]{GdH90}, although they are not necessarily equal to $H$.

Also, $\Stab_G(a_-)$ and $\Stab_G(a_+)$ are contained in $H$ (but not necessarily equal). 

Moreover, the fixed points $a_\pm$ of $a$ are such that $\lim_{n \to +\infty} a^n x = a_+$ and $\lim_{n \to -\infty} a^n x = a_-$, for any $x \in \Delta \Gamma$ (so in particular $a_+$
is the unique cluster point of the sequence $\{a^n\}_{n\geq 0}$).

The action of $G$ on itself by right multiplication also extends to a continuous action on $\Delta\Gamma$, in such a way that any element $g \in G$ acts trivially on $\partial \Gamma$ (see for instance \cite[Proposition 5.3.18]{BO08}).

In order to find an $H$-roaming set as in Proposition \ref{prop:condition}, we need to understand geometrically the conjugacy action of $H$ on $G$. 
We start by collecting properties of left and right actions of $H$ on
$\Delta \Gamma$ separately, in the following two lemmas. Combining
these lemmas, we will see that the conjugacy action of $H$ has a
uniform {\it ‘‘north-south dynamics'' out of $H$}, as shown in Figure \ref{fighyp:left}.

\begin{figure*}[h]
\centering
\begin{subfigure}{.5\textwidth}
  \centering
  \begin{tikzpicture}[line cap=round,line join=round,>=triangle 45,x=1.0cm,y=1.0cm]
\clip(-3.8,-3.5) rectangle (5,3.53);
\draw(0,0) circle (3cm);
\draw[black,domain=-2.683:2.683,rotate around={90:(0,0)}] plot({\x},{sqrt(7.5+0.5*(\x)^2)-2});
\draw[black,domain=-2.683:2.683,rotate around={270:(0,0)}] plot({\x},{sqrt(7.5+0.5*(\x)^2)-2});

\draw (-3,0)-- (3,0);
\draw[black,domain=-1.02:0.1,rotate around={90:(0,0)}] plot({\x},{sqrt(10+(\x)^2)-0.5});
\draw[black,domain=-2.461:0.1, rotate around={90:(0,0)}]
plot({\x},{-sqrt(40+(\x)^2)+8.5});

\draw[black,domain=0.43:1.02,rotate around={90:(0,0)}] plot({\x},{sqrt(10+(\x)^2)-0.5});
\draw[black,domain=0.43:2.461, rotate around={90:(0,0)}] plot({\x},{-sqrt(40+(\x)^2)+8.5});

\draw[->] (0,0) ++ (30:3.5) arc (30:165:3.5);
\draw[->] (0,0) ++ (-30:3.5) arc (-30:-165:3.5);

\fill[pattern = north east lines, opacity=0.2]
plot[domain=-2.683:2.683,rotate around={270:(0,0)}]
({\x},{sqrt(7.5+0.5*(\x)^2)-2}) -- plot[domain=270+26.39:270-26.39]
({3*cos(\x)},{3*sin(\x)}) -- plot[domain=-2.683:2.683,rotate around={90:(0,0)}]
({\x},{sqrt(7.5+0.5*(\x)^2)-2}) -- plot[domain=90+26.39:90-26.39]
({3*cos(\x)},{3*sin(\x)});

\fill[pattern = north east lines, opacity=0.2]
plot[domain=-1.02:1.02,rotate around={90:(0,0)}]
({\x},{sqrt(10+(\x)^2)-0.5}) -- plot[domain=180-55.19:180-55.19+35.35]
({3*cos(\x)},{3*sin(\x)}) -- plot[domain=2.461:-2.461, rotate
around={90:(0,0)}] ({\x},{-sqrt(40+(\x)^2)+8.5}) -- plot[domain=180+55.19:180+55.19-35.35]
({3*cos(\x)},{3*sin(\x)});

\fill [color=black] (-3,0) circle (1.5pt);
\draw[color=black] (-3.3,-0.1) node {$a_+$};
\fill [color=black] (3,0) circle (1.5pt);
\draw[color=black] (3.4,-0.1) node {$a_-$};

\draw (1.8,-0.3) node {$H$};
\draw (-1.5,-0.3) node {$H$};
\draw (0,1.5) node {$F$};
\draw (-1.4,1.5) node {$V$};
\draw (1.8,1.5) node {$V$};
\draw (-2.25,0.3) node {$hFh^{-1}$};
\draw (2.7,-3) node {$h\cdot h^{-1}$};
\draw (2.7,3) node {$h\cdot h^{-1}$};
\end{tikzpicture}
   \caption{Conjugacy action of $H$ on $G$.}
  \label{fighyp:left}
\end{subfigure}%
\begin{subfigure}{.5\textwidth}
  \centering
  \begin{tikzpicture}[line cap=round,line join=round,>=triangle 45,x=1.0cm,y=1.0cm]
\clip(-4,-3.73) rectangle (5.96,3.32);
\draw(0,0) circle (3cm);
\draw[black,domain=-1.38:1.38,rotate around={90:(0,0)}]
  plot({\x},{sqrt(7.5+(\x)^2)-0.4});
\draw[black,domain=-1.38:1.38,rotate around={270:(0,0)}] plot({\x},{sqrt(7.5+(\x)^2)-0.4});
\draw (-3,0)-- (3,0);

\draw (0,-3.46) ++(30:1.73) arc (30:150:1.73);

\draw[black,domain=-0.91:0,rotate around={150:(0,0)}]
  plot({\x},{sqrt(7.5+3*(\x)^2)-0.3});
\draw[black,domain=-0.91:-0.75,rotate around={210:(0,0)}]
plot({\x},{sqrt(7.5+3*(\x)^2)-0.3});

\draw[black,domain=0.7:0.91,rotate around={150:(0,0)}]
  plot({\x},{sqrt(7.5+3*(\x)^2)-0.3});
\draw[black,domain=-0.1:0.91,rotate around={210:(0,0)}]
plot({\x},{sqrt(7.5+3*(\x)^2)-0.3});

\fill[pattern = north east lines, opacity=0.2] plot[domain=-0.91:0.91,rotate around={150:(0,0)}]
  ({\x},{sqrt(7.5+3*(\x)^2)-0.3})-- plot[domain=240-17.64:240+17.64]
({3*cos(\x)},{3*sin(\x)});

\fill[pattern = north east lines, opacity=0.2] plot[domain=-0.91:0.91,rotate around={210:(0,0)}]
  ({\x},{sqrt(7.5+3*(\x)^2)-0.3})-- plot[domain=300-17.64:300+17.64]
({3*cos(\x)},{3*sin(\x)});

\fill[pattern = north east lines, opacity=0.2] plot[domain=-1.38:1.38,rotate around={90:(0,0)}] ({\x},{sqrt(7.5+(\x)^2)-0.4})--plot[domain=180+27.32:180-27.32]
({3*cos(\x)},{3*sin(\x)});

\fill[pattern = north east lines, opacity=0.2] plot[domain=-1.38:1.38,rotate around={270:(0,0)}] ({\x},{sqrt(7.5+(\x)^2)-0.4})--plot[domain=27.32:-27.32]
({3*cos(\x)},{3*sin(\x)});

\fill [color=black] (-1.5,-2.6) circle (1.5pt);
\fill [color=black] (1.5,-2.6) circle (1.5pt);
\fill [color=black] (-3,0) circle (1.5pt);
\fill [color=black] (0,0) circle (1.5pt);
\fill [color=black] (0,-3.46+1.73) circle (1.5pt);
\draw[->] (0,-0.1) -- (0,-3.35+1.73);
\fill [color=black] (3,0) circle (1.5pt);

\draw[color=black] (-3.3,-0.1) node {$a_+$};
\draw[color=black] (3.4,-0.1) node {$a_-$};
\draw[color=black] (0,0.25) node {$\mathrm{id}$};
\draw (-1.1,0.25) node {$H$};
\draw (0.2,-0.8) node {$s\cdot$};
\draw (0,-3.35+1.73-0.4) node {$s$};
\draw (-0.75,-1.6) node {$sH$};
\draw (1.7,-2.9) node {$sa_-$};
\draw (-1.5,-2.9) node {$sa_+$};
\draw (-1.61,-2.1) node {$sVt$};
\draw (1.65,-2.1) node {$sVt$};
\draw (2.7,0.5) node {$V$};
\draw (-2.65,0.5) node {$V$};
\end{tikzpicture}
  \caption{The subsets $V$ and $sVt$ are disjoint.}
  \label{fighyp:right}
\end{subfigure}
  \caption{The action of $G$ and a good neighborhood $V$ of $\{a_+,a_-\}$.}
\label{fig1}
\end{figure*}
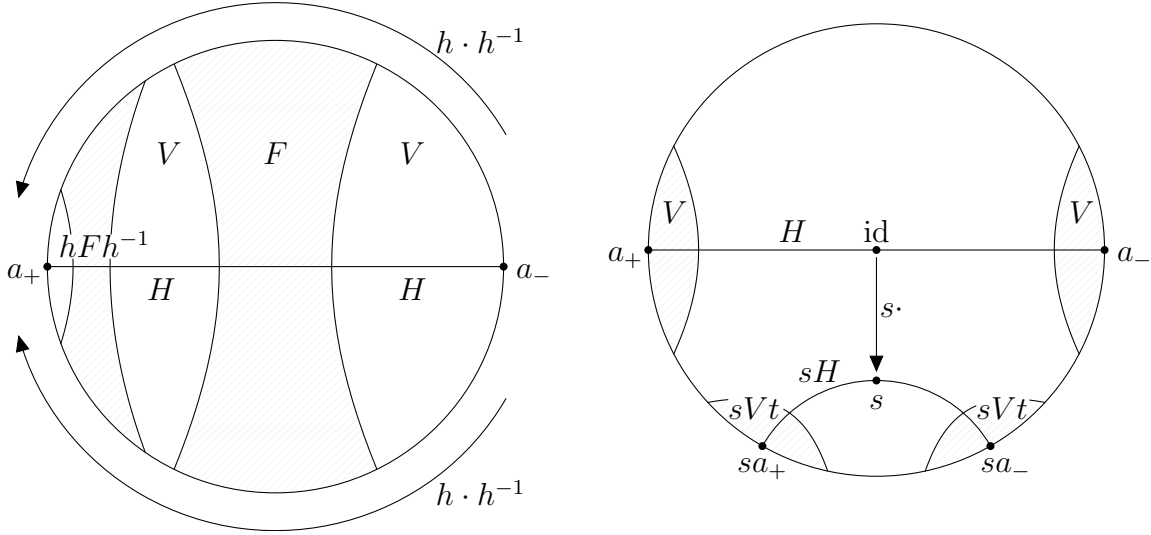

The following fact is certainly known, but we include a proof for the sake of completeness.
Let us recall that the sets $M(a_\pm,A)$ are neighbourhoods of $a_\pm$. 

\begin{lem}
\label{lefthyp}
For any finite sets $A,B \subset V(\Gamma)$,  there exists $n \in \Z$ such that 
\[G \cap (a^n \cdot M(a_-,B)^c) \subset M(a_+,A).\]
\end{lem}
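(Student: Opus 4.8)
The plan is to prove the stronger statement $a^n\cdot M(a_-,B)^c\subseteq M(a_+,A)$ on all of $\Delta\Gamma$, after which intersecting with $G$ is automatic. First I would record the equivariance of the basic neighbourhoods: since $a$ acts as a graph automorphism sending geodesics to geodesics, one has $g\cdot M(x,A)=M(gx,gA)$, and because $a\cdot a_\pm=a_\pm$ this gives $a^n\cdot M(a_-,B)^c=M(a_-,a^nB)^c$. So the goal becomes: for some $n$, $M(a_-,a^nB)^c\subseteq M(a_+,A)$, equivalently $M(a_+,A)^c\subseteq M(a_-,a^nB)$. The point of this rewriting is that the finite ``barrier'' $a^nB$ now recedes toward $a_+$, since $a^nb\to a_+$ for each $b\in B$; morally, once the barrier is deep enough in the $a_+$ direction, the only points it can separate from $a_-$ are those already sitting in the prescribed neighbourhood $M(a_+,A)$ of $a_+$. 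This is the uniform ``north--south'' behaviour promised in the discussion of Figure~\ref{fighyp:left}.

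Working with the contrapositive, I would fix $y\notin M(a_+,A)$ and show $y\in M(a_-,a^nB)$ for all large $n$, i.e. that every geodesic from $a_-$ to $y$ avoids the receding barrier $a^nB$. Since $y\notin M(a_+,A)$, Corollary~\ref{cor:bigA}(1) traps every $r_0$-quasi-geodesic from $a_+$ to $y$ against the \emph{fixed} finite set $V_{r_0,a_+}(A)$, so $y$ lies in the ``shadow'' of a bounded region as seen from $a_+$, and geodesics from $a_-$ to $y$ head toward this bounded region rather than out toward $a_+$. To make this quantitative I would project $a_+$ onto a geodesic between $a_-$ and $y$ as in Definition~\ref{perpendicular} and invoke the concatenation Lemma~\ref{concatenation}: if some geodesic from $a_-$ to $y$ passed through a point $a^nb$ of the barrier, thinness of the resulting quasi-geodesic triangle on $\{a_-,a_+,y\}$ would force $a^nb$ to lie within a bounded distance of the fixed shadow region, contradicting $a^nb\to a_+$ once $n$ is large.

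The hard part will be uniformity: a single $n$ must work simultaneously for the infinitely many admissible $y$, and $M(a_+,A)^c$ is \emph{unbounded} (it reaches out to $\partial\Gamma$), so no naive diameter bound closes the argument. This is exactly what Lemma~\ref{uniformbasis} is built for: it upgrades the pointwise convergence $a^nb\to a_+$ to a basepoint-uniform neighbourhood basis, letting me replace the moving barrier by one finite set that controls all the relevant geodesics at once. Concretely, I expect to choose, via Lemma~\ref{uniformbasis} at $x=a_+$, a finite set $C\supseteq A$ with $M(w,C)\subseteq M(a_+,A)$ for every $w\in M(a_+,C)$, then fix $n$ large enough that $a^n\in M(a_+,C)$ and that $a^nB$ overshoots $C$ in the $a_+$ direction; the thin-triangle estimate above then transfers the control uniformly. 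The genuine obstacle is ensuring that \emph{no} geodesic issued from $a_-$ toward the shadow of $y$ strays out to the receding barrier, and it is the combination of hyperbolicity (Lemma~\ref{concatenation}) with this uniform basis that overcomes it.

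Conceptually this is nothing but the uniform north--south dynamics of the hyperbolic isometry $a$ on the compact space $\Delta\Gamma$ (Theorem~\ref{topology}): a forward-trapping neighbourhood of $a_+$ together with a finite subcover of the compact set $M(a_-,B)^c$ would give the result at once. I would nonetheless prefer the combinatorial route above, since it yields the same conclusion without appealing to compactness, in keeping with the remark following Theorem~\ref{topology}.
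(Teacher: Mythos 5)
Your scaffolding coincides with the paper's proof: the equivariance rewriting $a^n\cdot M(a_-,B)^c=M(a_-,a^nB)^c$, the use of Lemma \ref{uniformbasis} at $a_+$ to produce a finite set $C$ with $M(w,C)\subset M(a_+,A)$ for all $w\in M(a_+,C)$, and the choice of $n$ pushing the barrier $a^nB$ deep into $M(a_+,C)$ and far from $C$ are exactly the moves the paper makes. The gap is in the step that actually produces the contradiction. You claim that if a geodesic from $a_-$ to $y$ (with $y\notin M(a_+,A)$) contains a barrier point $a^nb$, then ``thinness of the quasi-geodesic triangle on $\{a_-,a_+,y\}$ forces $a^nb$ to lie within bounded distance of the fixed shadow region, contradicting $a^nb\to a_+$.'' This mechanism cannot work: one side of that triangle is a bi-infinite geodesic from $a_-$ to $a_+$, and $a^nb$ lies within distance $d(b,e)+\mathrm{const}$ of that side for \emph{every} $n$ (since $d(a^nb,a^n)=d(b,e)$ and the orbit $\{a^k\}_k$ fellow-travels any such geodesic). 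So thinness is perfectly compatible with $a^nb$ sitting on $[a_-,y]$ for arbitrarily large $n$, and no proximity statement of this kind can contradict $a^nb\to a_+$. The same problem defeats the fellow-traveling variant: a geodesic from $a_+$ to $y$ is asymptotic to the axis near $a_+$, hence it, too, passes within bounded distance of $a^nb$ for all large $n$.

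What is missing is the paper's key idea, which is not a proximity estimate but a betweenness count along the offending geodesic itself. Since the barrier point $w:=a^nb$ lies in $M(a_+,C)$, the property of $C$ applied \emph{at} $w$ gives $M(w,C)\subset M(a_+,A)$; as both $a_-$ (after the harmless normalization $a_-\notin M(a_+,A)$) and the endpoint $y$ lie outside $M(a_+,A)$, both halves of the geodesic --- from $a_-$ to $w$ and from $w$ to $y$ --- are forced through the fixed bounded set $V_{0,a_-}(C)\supset C$ (the first half via Corollary \ref{cor:bigA}(1) at basepoint $a_-$, the second after replacing it by a geodesic with the same endpoints that meets $C$). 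The two resulting points $x_1,x_2$ satisfy $d(x_1,x_2)\le D:=\diam V_{0,a_-}(C)$, yet they lie on a common geodesic on opposite sides of $w$, so $d(x_1,x_2)=d(x_1,w)+d(w,x_2)\ge d(a^nB,C)>D$ by the choice of $n$ --- a contradiction. Your proposal never applies the property of $C$ at the barrier point, and without that (or a genuinely quantitative Gromov-product/Morse-lemma argument, which neither the paper's toolkit nor your sketch provides) the final step does not close. The compactness route you mention at the end (uniform north--south dynamics on the compact set $M(a_-,B)^c\not\ni a_-$) would indeed give the statement at once, but it uses precisely the machinery the paper avoids, and you set it aside rather than develop it.
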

\begin{proof}
First note that we can (and we will) assume that $a_-\notin M(a_+,A)$.
By Lemma \ref{uniformbasis} there exists a finite set $C \subset V(\Gamma)$ such that for all $y \in M(a_+,C)$ we have $M(y,C)\subset M(a_+,A)$. In particular, for all $y \in M(a_+,C)$ and $z \notin M(a_+,A)$ there exists a geodesic between $y$ and $z$ which intersects $C$. 

Choose $n \in \Z$ such that $a^n B \subset M(a_+,C)$ and such that the distance between points of $C$ and $a^n B$ is larger than the diameter $D$ of $V_{0,a_-}(C)$. We claim that this $n$ satisfies the conclusion of the lemma.

Assume by contradiction that there exists $z \in G = V(\Gamma)$ such that $z \notin a^n M(a_-,B)$ and $z \notin M(a_+,A)$.
Since $z \notin a^n M(a_-,B) = M(a_-,a^n B)$, there exists a geodesic $\alpha \in \cf(a_-,z)$ which contains a point $y \in a^n B \subset M(a_+,C)$. Let us denote $\alpha_{a_-}$ the sub-geodesic of $\alpha$ between $a_-$ and $y$ and with $\alpha_z$ the sub-geodesic between $y$ and $z$. 

Since $a_- \notin M(a_+,A)$, there exists a geodesic between $a_-$ and $y$ which intersects $C$. By Corollary \ref{cor:bigA}, the geodesic $\alpha_{a_-}$ meets $V_{0,a_-}(C)$ at a vertex $x_1$.
Moreover $z \notin M(a_+,A)$, so replacing $\alpha_z$ by another geodesic between $y$ and $z$ if necessary, we can assume that $\alpha_z$ meets $C \subset V_{0,a_-}(C)$ at a vertex $x_2$ (while $\alpha = \alpha_{a_-} \cup \alpha_z$ is still a geodesic).
But then
\[d(x_1,x_2) \leq \diam(V_{0,a_-}(C)) = D.\]
On the other hand, the length of $\alpha$ between these two points is equal to $d(x_1,y) + d(y,x_2)$, while $d(x_1,y) > D$ because $x_1 \in C$ and $y \in a^n B$. This is absurd.
\end{proof}

\begin{lem}
\label{righthyp}
For any $A \subset V(\Gamma)$ finite, there exists a finite $B \subset V(\Gamma)$ such that for any $k \in \Z$, \[(M(a_+,B) \cap (G \setminus H)) a^k \subset M(a_+,A).\]
\end{lem}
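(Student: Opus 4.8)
The plan is to reduce everything to a single geometric statement: if $y$ lies deep enough in a neighbourhood of $a_+$ and $y\notin H$, then the translated axis $yL$ stays far from $\mathrm{id}$. The uniformity in $k$ then comes for free, since \emph{all} the points $ya^k$ lie on this one geodesic. Throughout, let $L\in\cf(a_-,a_+)$ be a bi-infinite geodesic (an axis of $a$), so that the powers $a^m$, and hence the orbit $\{ya^m\}_m$, stay within a bounded distance $\kappa$ of $L$, resp. of $yL$.

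First I would set up a neighbourhood reduction. Apply Lemma \ref{uniformbasis} at $x=a_+$ to the finite set $A$, producing a finite $C\subset V(\Gamma)\setminus\{a_+\}$ with $M(z,C)\subset M(a_+,A)$ for every $z\in M(a_+,C)$; I will take $B\supseteq C$, so $y\in M(a_+,B)$ forces $M(y,C)\subset M(a_+,A)$, and it then suffices to arrange $ya^k\in M(y,C)$ for all $k$. Left translation by $y^{-1}$ is a graph automorphism, so by Theorem \ref{topology} it sends $M(y,C)$ to $M(\mathrm{id},y^{-1}C)$ and $ya^k$ to $a^k$; thus $ya^k\in M(y,C)$ is equivalent to $a^k\in M(\mathrm{id},y^{-1}C)$, i.e. every geodesic from $\mathrm{id}$ to $a^k$ avoids $y^{-1}C$. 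Since all such geodesics lie in a fixed neighbourhood $N_\rho(L)$ of the axis, this holds \emph{simultaneously for all $k$} as soon as $y^{-1}C\cap N_\rho(L)=\emptyset$, equivalently $d(\mathrm{id},yL)>R_0$ with $R_0:=\rho+\max_{c\in C}d(\mathrm{id},c)$.

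The heart of the proof is then to choose $B$ so that $y\in M(a_+,B)\cap(G\setminus H)$ implies $d(\mathrm{id},yL)>R_0$, and I would argue by contraposition. If $d(\mathrm{id},yL)\le R_0$, then since $\{ya^m\}_m$ is $\kappa$-dense in $yL$, some $m$ satisfies $d(\mathrm{id},ya^m)\le R_0+\kappa=:R_1$. Put $z:=ya^m$, so $y=za^{-m}\in z\langle a\rangle$ with $z\in B_{R_1}(\mathrm{id})$, and $z\notin H$ because $y\notin H$ while $a^{-m}\in H$. Hence $y$ lies in one of the \emph{finitely many} orbits $z\langle a\rangle$, $z\in B_{R_1}(\mathrm{id})\setminus H$. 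For each such $z$ I claim there is a finite set $B_z$ with $z\langle a\rangle\cap M(a_+,B_z)=\emptyset$; taking $B:=C\cup\bigcup_z B_z$ then contradicts $y\in M(a_+,B)$ and finishes the proof. To build $B_z$ I use that \emph{both} endpoints of $zL$ differ from $a_+$: indeed $za_+=a_+$ gives $z\in\Stab_G(a_+)\subset H$, while $za_-=a_+$ together with $\Stab_G(a_+)=\Stab_G(a_-)$ forces $za_+=a_-$, hence $z\in\Stab_G\{a_-,a_+\}=H$; both contradict $z\notin H$. Letting $q_z$ be a projection of $a_+$ onto $zL$ (Definition \ref{perpendicular}), for each vertex $v$ in a bounded neighbourhood of $zL$ the concatenation of a geodesic $[a_+,q_z]$ with the subgeodesic of $zL$ from $q_z$ to $v$ is an $r_0$-quasi-geodesic through $q_z$ by Lemma \ref{concatenation}, so Lemma \ref{lem:key}(3) forces every genuine geodesic from $a_+$ to $v$ to meet the finite set $V_{r_0,a_+}(q_z)$; taking $B_z:=V_{r_0,a_+}(q_z)$ (slightly enlarged to absorb the $\kappa$-neighbourhood) gives $v\notin M(a_+,B_z)$, as required.

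The main obstacle is exactly this separation step: showing that each orbit $z\langle a\rangle$ with $z\notin H$ is uniformly bounded away from $a_+$. This is where the hypothesis $y\notin H$ is genuinely used (through $z\notin H$) and where the equality $\Stab_G(a_+)=\Stab_G(a_-)$ is indispensable — without it one could have $za_-=a_+$, so that $z\langle a\rangle$ would accumulate at $a_+$ and no finite $B_z$ could separate it. This is precisely the mechanism by which the statement fails for $y\in H$, where $yL=L$ accumulates at $a_+$. The bounded discrepancy $\kappa$ between the orbit $\{a^m\}$ and a genuine axis $L$ is only a constant-bookkeeping nuisance and does not affect the structure of the argument.
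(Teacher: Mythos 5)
Your proposal is correct, and its skeleton is the same as the paper's: both proofs start from Lemma \ref{uniformbasis} to produce the set $C$, both use quasi-convexity of $\langle a\rangle$ (the paper via \cite[Proposition 8.21]{GdH90}, you via the constants $\kappa,\rho$ for an axis $L$), both argue by contraposition that if some $ya^k$ escapes $M(a_+,A)$ then the orbit $y\langle a\rangle$ must pass through a fixed finite set (your ball $B_{R_1}(\mathrm{id})$, the paper's neighbourhood $B'=B(C,r)$ of $C$), and both finish by choosing $B$ so that $M(a_+,B)$ misses the finitely many orbits $z\langle a\rangle$ with $z\notin H$ passing through that finite set. The one place where you genuinely diverge is this last separation step: the paper argues softly, observing that the cluster points of $(za^k)_k$ lie in $\partial\Gamma\setminus\{a_-,a_+\}$ and then simply picking a basic neighbourhood of $a_+$ avoiding these countably many sequences, whereas you build the separating sets $B_z$ explicitly by projecting $a_+$ onto $zL$ and applying Lemma \ref{concatenation} together with Lemma \ref{lem:key}(3). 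Your version is more constructive and avoids any appeal to the topology of $\Delta\Gamma$ beyond the definition of the sets $M(x,A)$; the price is the use of projections of a boundary point onto a bi-infinite geodesic and the constant-chasing needed to absorb the $\kappa$-neighbourhood of $zL$ --- though the paper itself uses this projection device with the same level of informality in Definition \ref{perpendicular} and Lemma \ref{uniformbasis}, so you are on equal footing. Two small things to tighten: the assertion that $za_-=a_+$ together with $\Stab_G(a_+)=\Stab_G(a_-)$ forces $za_+=a_-$ deserves its one-line proof (the conjugate $zaz^{-1}$ fixes $za_-=a_+$, hence lies in $\Stab_G(a_+)=\Stab_G(a_-)$ and fixes $a_-$ as well, but its only fixed boundary points are $za_\pm$, so $za_+=a_-$); and you should note that $B_{R_1}(\mathrm{id})$ is finite because the Cayley graph of a hyperbolic group is locally finite --- this is exactly where the argument, like the paper's, is special to Section \ref{proofA} and would break for the coned-off graph of Section \ref{proofs}.
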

\begin{proof}

We want to construct a neighbourhood $M(a_+,B)$ such that for any $y\in M(a_+,B)$, the sequence $(ya^k)_k$ stays inside $M(a_+,A)$. It is helpful to think about $(ya^k)_k$ as a quasi-geodesic, from $ya_-$ to $ya_+$. 

The proof goes in two steps. We firstly find an intermediate neighbourhood $M(a_+,B')$ such that if the sequence $(ya^k)_k$ leaves $M(a_+,A)$ for some $y\in M(a_+,B')$, then it has to go through a fixed finite set, which we can assume to be $B'$. We will conclude by choosing $M(a_+,B)\subset M(a_+,B')$ which does not intersect the sequences $(ya^k)_k$ for $y\in B'$. 

{\sc Step 1.} There exists a finite set $B'\subset V(\Gamma)$ such that if $y\in M(a_+,B') \cap G$ is such that $ya^k\notin M(a_+,A)$ for some $k \in \Z$, then there exists $m\in \Z$ such that $ya^m\in B'$.

By \cite[Proposition 8.21]{GdH90}, there exists a finite constant $r > 0$ such that for any $p \in \Z$, all geodesics between the neutral element $e$ and $a^p$ are contained in the $r$-neighbourhood of the sequence $\{a^k, k \in \Z\}$.

By Lemma \ref{uniformbasis} there exists a finite set $C \subset V(\Gamma)$ such that for all $y \in M(a_+,C)$ we have $M(y,C) \subset M(a_+,A)$.
Put $B':=B(C,r)$, the $r$-neighbourhood of $C$. 

Take $y\in M(a_+,B')\cap G \subset M(a_+,C)$ such that  $ya^k \notin M(a_+,A)$ for some $k \in \Z$ . Then $ya^k\notin M(y,C)$, so there exists a geodesic $\alpha$ between $y$ and $ya^k$ which meets $C$ at a point $c$. Then $y^{-1}c$ belongs to a geodesic between $e$ and $a^k$, so it is at distance less than $r$ to some $a^m$. In other words, $ya^m \in B(C,r) = B'$, which proves Step 1.
  
{\sc Step 2.} Choice of $B$.

Observe that the set of cluster points of the sequences $(ya^k)_k$, with $y \in B'\setminus H$ is finite and contained in $\partial\Gamma\setminus\{a_+,a_-\}$. So there exists $B$ such that
 \[M(a_+,B)\subset M(a_+,B')\quad \text{ and }\quad M(a_+,B)\cap
 \lbrace ba^k\, \vert \, b\in B'\setminus H,k\in \Z\rbrace=\emptyset.\]
The subset $B$ satisfies the conclusion of the lemma. Indeed, if $y
\in M(a_+,B)\cap (G \setminus H)$ is such that $ya^k\notin M(a_+,A)$
for some $k \in \Z$, then by the claim there exists $h$ such that $y
\in B' a^{-h}$. But in this case we would have $y\in M(a_+,B)\cap \lbrace
ba^p\, \vert \, b\in B'\setminus H,p\in \Z\rbrace$, which was assumed to be empty. Therefore $ya^k \in M(a_+,A)$ for any $k$.
\end{proof}

Now we can deduce a relevant property of the conjugacy action of $H$, as shown in Figure \ref{fighyp:left}.

\begin{prop}
\label{conjughyp}
  For every $s,t\in G\setminus H$, there exists an $H$-roaming set $F\subset G\setminus H$ such that $sF^ct\cap F^c$ is finite. 
\end{prop}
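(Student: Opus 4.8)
The plan is to construct, for a given pair $s,t\in G\setminus H$, the $H$-roaming set demanded by the statement. Choose sufficiently large finite sets $A_+,A_-\subset V(\Gamma)$, put $V:=M(a_+,A_+)\cup M(a_-,A_-)$ (a small neighbourhood of the pole pair), and set
$F:=G\setminus(H\cup V)$, so that $F\subset G\setminus H$ and $F^c=H\cup(G\cap V)$. There are two things to check: that $sF^ct\cap F^c$ is finite, and that $F$ is $H$-roaming with disjoining sequence taken inside $\langle a\rangle$.

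\emph{Finiteness.} The key structural input is that the four points $a_+,a_-,sa_+,sa_-$ are pairwise distinct. Indeed $sa_\pm\neq a_\pm$ since $s\notin H\supseteq\Stab_G(a_\pm)$; and if $sa_+=a_-$ then $s$ normalises $E:=\Stab_G(a_+)=\Stab_G(a_-)$, because $\Stab_G(a_-)=\Stab_G(sa_+)=s\,\Stab_G(a_+)\,s^{-1}$. As $E\subset H$ is virtually cyclic and contains the loxodromic $a$, all its infinite-order elements have endpoint pair $\{a_+,a_-\}$, so a normalising $s$ must preserve this pair, forcing $s\in\Stab_G\{a_+,a_-\}=H$, a contradiction (the case $sa_-=a_+$ is symmetric). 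Now recall that $G$ is a discrete dense subset of the compact space $\Delta\Gamma$ (Theorem \ref{topology}), so a subset of $G$ is finite as soon as it does not accumulate on $\partial\Gamma$. Since the $M(a_\pm,\cdot)$ form neighbourhood bases at $a_\pm$ and $H$ accumulates exactly on $\{a_+,a_-\}$, enlarging $A_\pm$ forces $\overline{F^c}\cap\partial\Gamma$ into arbitrarily small neighbourhoods of $\{a_+,a_-\}$; the homeomorphism $x\mapsto sxt$ of $\Delta\Gamma$, which on $\partial\Gamma$ reduces to $x\mapsto sx$ because right translations act trivially on the boundary, then puts $\overline{sF^ct}\cap\partial\Gamma$ into small neighbourhoods of $\{sa_+,sa_-\}$. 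As the four points are distinct, for $A_\pm$ large these are disjoint, whence $\overline{sF^ct\cap F^c}\cap\partial\Gamma\subseteq\overline{sF^ct}\cap\overline{F^c}\cap\partial\Gamma=\emptyset$ and $sF^ct\cap F^c$ is finite. This is exactly the separation depicted in Figure \ref{fighyp:right}.

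\emph{Roaming.} Here I would combine Lemmas \ref{lefthyp} and \ref{righthyp} into a uniform ``north--south'' absorption of $F$ under conjugation (Figure \ref{fighyp:left}): there is $m_0$ with $a^mFa^{-m}\subset M(a_+,A_+)\subset V$ for all $m\ge m_0$. Given the target $M(a_+,A_+)$, Lemma \ref{righthyp} yields a finite set $B_1$ with $(M(a_+,B_1)\cap(G\setminus H))\,a^{k}\subset M(a_+,A_+)$ for all $k\in\Z$, controlling the right multiplication by $a^{-m}$. Since $F\subset M(a_-,A_-)^c\cap(G\setminus H)$, Lemma \ref{lefthyp} (target $B_1$, avoidance set $A_-$) gives $n$ with $a^{n}F\subset M(a_+,B_1)$; choosing $B_1$ so that $M(a_+,B_1)$ is forward invariant under left translation by $a$ upgrades this to $a^{m}F\subset M(a_+,B_1)$ for all $m\ge n=:m_0$, uniformly over the unbounded set $F$. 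Multiplying on the right by $a^{-m}$ and using $a^mF\subset G\setminus H$ gives the absorption. As $F\cap V=\emptyset$, we get $a^mFa^{-m}\cap F=\emptyset$ for $m\ge m_0$, hence $a^{i m_0}Fa^{-i m_0}\cap a^{j m_0}Fa^{-j m_0}=\emptyset$ for $i\neq j$ (conjugate by $a^{-jm_0}$). Thus $(a^{k m_0})_{k\ge0}\subset\langle a\rangle\subset H$ is a disjoining sequence and $F$ is $H$-roaming.

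\emph{The main obstacle.} The delicate point is precisely this uniform conjugation dynamics. On $\partial\Gamma$ conjugation by $a$ has $a_+$ attracting and $a_-$ repelling, but one must move \emph{actual group elements} of the unbounded set $F$ into a fixed neighbourhood of $a_+$ with a single $m_0$. The difficulty is that conjugation blends the left action (genuine north--south, Lemma \ref{lefthyp}, which requires staying away from $a_-$, whence the removal of $M(a_-,A_-)$) with the right action (trivial on $\partial\Gamma$ but apt to drag elements of $G\setminus H$ back out of the $a_+$-cusp, controlled by Lemma \ref{righthyp}, whence the removal of $H$). Reconciling the two, and above all upgrading the one-step estimate to all $m\ge m_0$ via a forward-invariant neighbourhood of $a_+$, is where the real work lies; the only other subtlety, distinctness of the boundary data, is dispatched by the short normaliser argument above.
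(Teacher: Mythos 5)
The finiteness half of your argument is correct and is essentially the paper's mechanism in different clothing: the paper picks a neighbourhood $V$ of $\{a_+,a_-\}$ with $V \cap sVt = \emptyset$ (using continuity of the left and right actions on $\Delta\Gamma$), while you separate the boundary accumulation sets of $F^c$ and of $sF^ct$; your explicit verification that $a_+,a_-,sa_+,sa_-$ are pairwise distinct, via the normaliser argument, fills in a point the paper uses silently. The overall shape of your roaming half --- conjugating $F$ by powers of $a$, with Lemma \ref{lefthyp} controlling the left action away from $a_-$ and Lemma \ref{righthyp} controlling the right action away from $H$ --- is also the paper's, and your disjoining sequence again lives in $\langle a \rangle$.

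The gap is exactly the step you flag as ``where the real work lies'': the assertion that $B_1$ can be chosen so that $M(a_+,B_1)$ is forward invariant under left translation by $a$. This is asserted, never proved, and does not follow from anything quoted. Since $a \cdot M(a_+,B_1) = M(a_+,aB_1)$, invariance means $M(a_+,aB_1) \subset M(a_+,B_1)$; the only easy sufficient condition, $B_1 \subset aB_1$, forces $aB_1 = B_1$, whence some positive power of $a$ would fix an element of $G$ --- impossible. What your uniform threshold $m_0$ really needs is north--south dynamics with \emph{locally uniform} convergence on $\Delta\Gamma \setminus \{a_-\}$, from which arbitrarily small forward-invariant open neighbourhoods of $a_+$ can be manufactured and basis sets sandwiched inside them; but the paper records only the pointwise convergence $a^n x \to a_+$, which by itself yields no invariant neighbourhoods. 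So the arithmetic-progression disjoining sequence $(a^{km_0})_{k \geq 0}$ rests on an unestablished claim. The paper sidesteps uniformity entirely: it builds the disjoining sequence $h_n = a^{k_n}$ \emph{inductively}, applying Lemma \ref{righthyp} and then Lemma \ref{lefthyp} at stage $n$ to the shrunken target $\bigcap_{i<n} h_i V h_i^{-1}$, so that only one exponent per target is needed --- precisely what the ``there exists $n$'' form of Lemma \ref{lefthyp} provides. Alternatively, your route can be salvaged without any invariance claim by strengthening Lemma \ref{lefthyp}: in its proof the exponent $n$ is only required to satisfy $a^nB \subset M(a_+,C)$ and $d(C,a^nB) > D$, and both conditions hold for all sufficiently large $n$, so the lemma is in fact valid for all $n \geq n_0$; with that form, $a^mF \subset M(a_+,B_1)$ for every $m \geq n_0$ and your absorption argument closes.
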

\begin{proof}
Choose a neighbourhood $V_0$ of $\{a_+,a_-\}$ such that $V_0$ is disjoint from $sV_0$. Since the right action of $t$ on $\Delta\Gamma$ is continuous, we can find a $V\subset V_0$ such that $V$ and $sVt$ are disjoint (see Figure \ref{fighyp:right}). We observe that $sVt \cap H$, $sHt \cap V$ and $sHt \cap H$ are finite because the only cluster points of $H$ are in $V$ and the only cluster points of $sHt$ are in $sVt$.

Therefore the set $F :=V^c\cap(G\setminus H)$ is such that $sF^ct \cap F^c$ is finite. To prove that it is $H$-roaming, let us construct a disjoining sequence $(h_k)_k$ inductively. First put $h_0 := e$.

Now assume that $h_0,\dots,h_{n-1}$ have been constructed, for some $n \geq 1$. We will construct $h_n$. Denote by $V_n := \bigcap_{i = 0}^n h_iVh_i^{-1}$. It is a neighbourhood of $\{a_-,a_+\}$, by continuity of left and right actions of $H$. Now put $F_n := V_n^c\cap(G\setminus H)$.

Recall that the family $\{M(a_\pm,A)\}_A$ forms a basis of neighbourhoods of $a_\pm$. By Lemma \ref{righthyp}, there exists a neighbourhood $V'$ of $a_+$ such that $(V'\cap (G\setminus H)) a^k \subset V_n$ for all $k \in \Z$. By Lemma \ref{lefthyp}, there exists $k_n \in \Z$ such that $G\cap a^{k_n} V^c \subset V'$ and in particular  $a^{k_n} F \subset V'$. Note also that $(a^{k_n} F) \cap H = \emptyset$.

Altogether, we get that $a^{k_n} F a^{-k_n} \subset V_n$ is disjoint from $F_n$. But $F_n$ contains all the $h_iFh_i^{-1}$, $i \leq n-1$. So we can define $h_n = a^{k_n}$.
\end{proof}

Now Theorem A follows from Proposition \ref{prop:condition}.

\begin{rem}\label{unique}
Note that in the proof of Proposition \ref{conjughyp} the disjoining sequence that we construct is contained in the subgroup $H_0 := \langle a \rangle \subset H$. Then the proof of Theorem A actually shows that if $P \subset LG$ is an algebra with property Gamma such that $LH_0 \subset P$, then $P \subset LH$. Hence $u_a$ is contained in a unique maximal amenable von Neumann subalgebra of $M$.
\end{rem}


\section{Relatively hyperbolic case}
\label{proofs}

\subsection{Proof of Theorem B}
\label{proofB}

Let $G$ be a hyperbolic group relative to a family $\rg$ of subgroups of $G$, and let $H \in \rg$ be an infinite subgroup.

Consider a Cayley graph $\Gamma$ of $G$ such that  the coned-off graph
$\hat\Gamma$ of $\Gamma$ with respect to $\rg$ is fine and
hyperbolic. Denote by $\Delta\Gamma$ its Gromov compactification,
endowed with the topology generated by the sets $\lbrace M(x,A)\rbrace_{x,A}$. We still identify $G$ with $V(\Gamma)\subset V(\hat \Gamma)$.

Now denote by $c = [H] \in V(\hat{\Gamma})$ the vertex associated with $[H] \in G/H$. This point is not in the boundary $\partial \Gamma$, but it is represented out of $\Gamma$, as in Figure \ref{fig1}.

We will show that for any neighbourhood $V$ of $c$, the set $F :=V^c\cap (G\setminus H)$ (Figure \ref{fig:left}) is $H$-roaming in the sense of Definition \ref{Hroaming}. Then we will show that if $V$ is small enough (Figure \ref{fig:right}), $F$ satisfies the condition of Proposition \ref{prop:condition}, hence proving Theorem B.

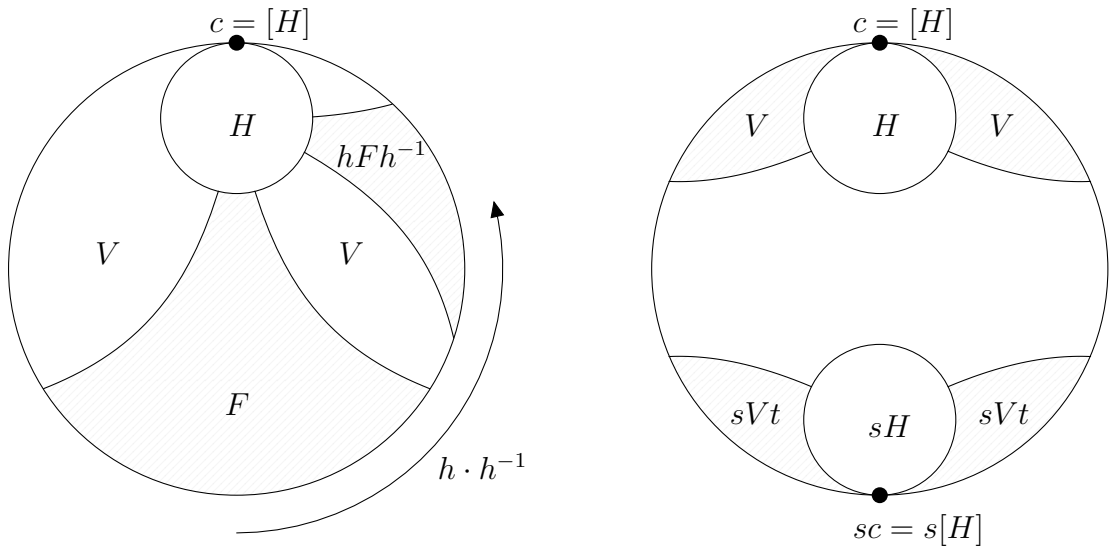
\begin{figure*}[h]
\centering
\begin{subfigure}{.5\textwidth}
  \centering
  \begin{tikzpicture}[line cap=round,line join=round,>=triangle 45,x=1.0cm,y=1.0cm]
\clip(-3,-3.8) rectangle (14,3.5);

\draw(0,0) circle (3cm);
\draw(0,2) circle (1cm);
\draw[black,domain=0.24:2.54] plot({\x},{(2-3*\x)*(1+\x)^(-1)});
\draw[black,domain=-2.54:-0.24] plot({\x},{(2+3*\x)*(1-\x)^(-1)});
\draw[black,domain=0.24:0.64,rotate around={77:(0,2)}] plot({\x},{(2-3*\x)*(1+\x)^(-1)});
\draw[black,domain=-2.20:-0.24, rotate around={77:(0,2)}] plot({\x},{(2+3*\x)*(1-\x)^(-1)});

\fill[pattern = north east lines, opacity=0.2]
plot[domain=256.06:283.94]
({cos(\x)},{2+sin(\x)}) -- plot[domain=0.24:2.54]
({\x},{(2-3*\x)*(1+\x)^(-1)}) -- plot[domain=328:212]
({3*cos(\x)},{3*sin(\x)}) -- plot[domain=-2.54:-0.24] ({\x},{(2+3*\x)*(1-\x)^(-1)}) ;

\fill[pattern = north east lines, opacity=0.2]
plot[domain=0.24:0.64,rotate around={77:(0,2)}]
({\x},{(2-3*\x)*(1+\x)^(-1)}) -- plot[domain=46.8:-17.87]
({3*cos(\x)},{3*sin(\x)}) -- plot[domain=-2.20:-0.24, rotate
around={77:(0,2)}] ({\x},{(2+3*\x)*(1-\x)^(-1)}) -- plot[domain=332.13-360:0]
({cos(\x)},{2+sin(\x)});

\draw[black,domain=-90:15,->] plot({3.5*cos(\x)},{3.5*sin(\x)});

\draw (-0.5,3.6) node[anchor=north west] {$c=[H]$};
\fill [color=black] (0,3) circle (3pt);
\draw (-0.25,2.2) node[anchor=north west] {$H$};
\draw (-0.3,-1.5) node[anchor=north west] {$F$};
\draw[rotate around={77:(0,2)}] (0.1,0.83) node[anchor=north west] {$hFh^{-1}$};
\draw (2.5,-2.3) node[anchor=north west] {$h\cdot h^{-1}$};
\draw (-2,0.5) node[anchor=north west] {$V$};
\draw (1.2,0.5) node[anchor=north west] {$V$};
\end{tikzpicture}
   \caption{Conjugacy action of $H$ on $G$.}
  \label{fig:left}
\end{subfigure}%
\begin{subfigure}{.5\textwidth}
  \centering
  \begin{tikzpicture}[line cap=round,line join=round,>=triangle 45,x=1.0cm,y=1.0cm]
\clip(-3,-3.8) rectangle (3.2,3.5);

\draw(0,0) circle (3cm);
\draw(0,2) circle (1cm);
\draw(0,-2) circle(1cm);

\draw[black,domain=0.24:1.14,rotate around={50:(0,2)}] plot({\x},{(2-3*\x)*(1+\x)^(-1)});
\draw[black,domain=-0.24:-1.14,rotate around={-50:(0,2)}]
plot({\x},{(2+3*\x)*(1-\x)^(-1)});

\draw[black,domain=0.24:1.14,rotate around={-50:(0,-2)}] plot({\x},{(-2+3*\x)*(1+\x)^(-1)});
\draw[black,domain=-0.24:-1.14,rotate around={50:(0,-2)}]
plot({\x},{(-2-3*\x)*(1-\x)^(-1)});

\fill[pattern = north east lines, opacity=0.2] plot[domain=22.77:180-22.77]
({3*cos(\x)},{3*sin(\x)}) -- plot[domain=-0.24:-1.14,rotate around={-50:(0,2)}]
({\x},{(2+3*\x)*(1-\x)^(-1)}) -- plot[domain=180+26.06:90]
({cos(\x)},{2+sin(\x)}) -- 
plot[domain=90:-26.06] ({cos(\x)},{2+sin(\x)})
plot[domain=0.24:1.14,rotate around={50:(0,2)}]
({\x},{(2-3*\x)*(1+\x)^(-1)});

\fill[pattern = north east lines, opacity=0.2] plot[domain=270+67.22:270-67.22]
({3*cos(\x)},{3*sin(\x)}) -- plot[domain=0.24:1.14,rotate
around={-50:(0,-2)}] ({\x},{(-2+3*\x)*(1+\x)^(-1)}) --
plot[domain=270-116.04:270+116.04] ({cos(\x)},{-2+sin(\x)}) --
plot[domain=-1.14:-0.24,rotate around={50:(0,-2)}] ({\x},{(-2-3*\x)*(1-\x)^(-1)});

\draw (1.3,2.2) node[anchor=north west] {$V$};
\draw (-1.3,2.2) node[anchor=north east] {$V$};
\draw (1.15,-2.2) node[anchor=south west] {$sVt$};
\draw (-1.15,-2.2) node[anchor=south east] {$sVt$};
\draw (-0.5,3.6) node[anchor=north west] {$c=[H]$};
\fill [color=black] (0,3) circle (3pt);
\draw (-0.25,2.2) node[anchor=north west] {$H$};
\draw (-0.3,-1.8) node[anchor=north west] {$sH$};
\draw (-0.5,-3.1) node[anchor=north west] {$sc=s[H]$};
\fill [color=black] (0,-3) circle (3pt);
\end{tikzpicture}
  \caption{The subsets $V$ and $sVt$ are disjoint.}
  \label{fig:right}
\end{subfigure}
  \caption{The action of $G$ and a good neighborhood $V$ of $c=[H]$.}
\label{fig2}
\end{figure*}

In this section, we will write $V_r$ instead of $V_{r,c}$, $r \geq 0$ (see Lemma \ref{lem:key}).

Remark that since $c$ shares an edge with all the points in $H$ (and
only with them), any geodesic between $c$ and a point $x \in \Delta
\Gamma$ contains exactly one element in $H$. In particular one has the
following simple lemma. 

\begin{lem}
\label{Hneighbourhood}
The family $\{M(c,A)\}_{A\subset H}$ is a basis of neighbourhoods of $c$.
\end{lem}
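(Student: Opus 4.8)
The plan is to isolate the only nontrivial point, namely that finite subsets of $H$ suffice, and to deduce the rest from the general properties of the topology of Theorem \ref{topology} together with the observation made just above the statement: since $c$ is joined by an edge precisely to the vertices of $H$, every geodesic issued from $c$ has its initial edge of the form $(c,h)$ with $h \in H$.

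First I would note that each $M(c,A)$ with $A \subset H$ finite is a neighbourhood of $c$: it is a basic open set for the topology, and $c \in M(c,A)$ because $c \notin H \supseteq A$, so the trivial geodesic $(c) \in \cf(c,c)$ avoids $A$. For the converse I would invoke Bowditch's construction (Theorem \ref{topology}, together with the uniform basis of Lemma \ref{uniformbasis}): by design, for each point the sets $M(c,A)$ with $A$ a finite subset of $V(\hat\Gamma)\setminus\{c\}$ already form a basis of neighbourhoods of $c$. Thus every neighbourhood $U$ of $c$ contains some $M(c,A)$, and the whole task reduces to replacing this $A$ by a finite subset of $H$, i.e. to finding a finite $A' \subset H$ with $M(c,A') \subseteq M(c,A)$.

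To build $A'$, for each $a \in A$ I would consider the graph $\bigcup_{\alpha \in \cf(c,a)}\alpha$, which is locally finite by Lemma \ref{lem:key}(1). Hence $c$ has finite degree in it, and only finitely many vertices arise as the vertex immediately following $c$ on a geodesic from $c$ to $a$; by the adjacency remark these vertices all lie in $H$. I then let $A' \subset H$ be the finite union, over $a \in A$, of these ``first-step'' vertices. The inclusion $M(c,A') \subseteq M(c,A)$ is proved by contraposition: if $y \notin M(c,A)$, some geodesic $\alpha \in \cf(c,y)$ meets $A$ at a point $a$; the initial subpath of $\alpha$ from $c$ to $a$ is itself a geodesic, so its first vertex after $c$ is some $h \in H$ with $h \in A'$, and since $h \in \alpha$ we conclude $y \notin M(c,A')$. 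Combined with $M(c,A) \subseteq U$, this finishes the argument.

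I expect the only delicate point to be the ``recentering'' step, i.e. guaranteeing that the neighbourhoods of $c$ centered \emph{at} $c$ already form a basis, which I would settle by citing \cite{Bo12} rather than re-deriving it, since there the sets $M(x,A)$ are set up precisely as neighbourhood bases at their center $x$. The reduction to $A \subset H$, by contrast, is an immediate consequence of the fact that the neighbours of $c$ in $\hat\Gamma$ are exactly the elements of $H$.
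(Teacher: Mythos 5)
Correct, and essentially the same argument as the paper's: reduce an arbitrary finite set $B\subset V(\hat\Gamma)$ to the set of first-step vertices of geodesics issuing from $c$, which automatically lie in $H$ because the neighbours of $c$ are exactly the elements of $H$. The only (harmless) difference is that the paper chooses a single geodesic $\alpha_b$ per $b\in B$ and sets $A:=\{\alpha_b(1)\}_{b\in B}$ --- finiteness is then automatic, and $M(c,A)\subset M(c,B)$ follows by swapping initial geodesic segments --- whereas you collect the first vertices of \emph{all} geodesics from $c$ to each $a\in A$, invoking Lemma \ref{lem:key}(1) for finiteness, which makes the inclusion immediate; both your write-up and the paper handle the ``recentering'' point the same way, by relying on Bowditch's construction of the topology.
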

\begin{proof}
Let $B\subset V(\hat\Gamma)$ be a finite subset, for every $b\in B$ choose a geodesic $\alpha_b$ from $c$ to $b$. Set $A:=\{\alpha_b(1)\}_{b\in B}$ and observe that $M(c,A)\subset M(c,B)$. 
\end{proof}

\begin{rem}\label{inH}
In the same way, if $A \subset H$ is finite and $r \geq 0$, the set $V_r(A)$ from Corollary \ref{cor:bigA} can be assumed to be contained in $H$. Indeed one can replace $V_r(A)$ by the finite set of points in $H$ which lie on an $r$ quasi-geodesic from $V_r(A)$ to $c$.
\end{rem}

To give an idea about the topology near $c$, let us mention that any sequence $(h_n)_n$ in $H$ which goes to infinity converges to $c$. 

As in the hyperbolic case, we will study geometrically the conjugacy action of $H$ on $G$. We will treat left and right actions separately. First, the left multiplication of $G$ on itself extends to an isometric action on $\hat\Gamma$, and hence extends to a continuous action on $\Delta\Gamma$. Let us extend also the right action.

\begin{defn}
The {\it right action} of $G$ on $\Delta \Gamma$ is the action whose restriction to $G$ is equal to the right multiplication by $G$ on itself, and which is trivial on $\Delta \Gamma \setminus G$. This action is {\it a priori} not continuous, and it clearly commutes with the left action.
\end{defn}

The following lemma is contained in Proposition 12 of \cite{Oz06}, which actually shows the continuity of the right action on $\Delta\Gamma$.

\begin{lem}
\label{Oz}
The right action of $G$ on $\Delta\Gamma$ is continuous at $c$.
\end{lem}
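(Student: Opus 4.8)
The plan is to prove the equivalent statement that for each fixed $t \in G$ the map $R_t \colon \Delta\Gamma \to \Delta\Gamma$, $y \mapsto y\cdot t$, is continuous at $c$; since $G$ is discrete, this is exactly continuity of the right action at $c$. As $c \notin G$, it is fixed by $R_t$, and by Lemma \ref{Hneighbourhood} the sets $M(c,A)$ with $A \subset H$ finite form a neighbourhood basis at $c$. So it suffices to show: for every finite $A \subset H$ there is a finite $B \subset V(\hat\Gamma)$ with $R_t(M(c,B)) \subset M(c,A)$. The points of $M(c,B)$ lying outside $G$ are fixed by $R_t$, so for them it is enough to arrange $M(c,B) \subset M(c,A)$; this holds as soon as $B \supset A$, since a geodesic from $c$ avoiding $B$ avoids $A$. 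The real content is thus the behaviour of $R_t$ on $M(c,B)\cap G$, which I would analyze through its contrapositive: if $y \in G$ and $yt \notin M(c,A)$, then $y \notin M(c,B)$.

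The geometric input is that right translation has bounded displacement: $y$ and $yt$ are joined in $\Gamma \subset \hat\Gamma$ by a path of length $|t|$, so $d_{\hat\Gamma}(y,yt) \le |t|$. Set $r := 2|t|$ and, using Corollary \ref{cor:bigA} (together with Remark \ref{inH}, to keep $B \subset H$), put $B := V_r(A)$; note $B \supset A$, so the previous paragraph applies. If now $yt \notin M(c,A)$, choose a geodesic $\gamma \in \cf(c,yt)$ meeting $A$ and a geodesic $\delta \in \cf(yt,y)$ of length at most $|t|$. A direct length estimate shows that any subpath of $\gamma \cup \delta$ running from a vertex of $\gamma$ to a vertex of $\delta$ exceeds the corresponding distance by at most $2|t|$, so, after deleting backtracks, $\gamma \cup \delta$ is a simple $r$-quasi-geodesic from $c$ to $y$ that still meets $A$. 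Corollary \ref{cor:bigA}(2) then forces $y \notin M(c,V_r(A)) = M(c,B)$, as required. Combining the two cases yields $R_t(M(c,B)) \subset M(c,A)$.

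The main obstacle is precisely the clause ``still meets $A$'' above. One checks that $\gamma$ and $\delta$ can only intersect within the last $|t|$ vertices of $\gamma$ near the common endpoint $yt$, so deleting backtracks preserves the crossing of $A$ \emph{unless} that crossing lies in those last $|t|$ steps, that is, unless $d_{\hat\Gamma}(c,yt)$ is bounded by a constant depending only on $A$ and $t$. This bounded regime, in which $y$ and $yt$ sit in a fixed ball around $c$, is the delicate point: because $\hat\Gamma$ is not locally finite, the relevant barrier cannot be captured by a finite set naively. I expect to resolve it by instead building, through the projection construction of Definition \ref{perpendicular} and Lemma \ref{concatenation}, a genuine simple $r$-quasi-geodesic from $c$ to $y$ passing through the crossing vertex, and then invoking the fineness of $\hat\Gamma$ via Lemma \ref{lem:key}(3) to conclude that every geodesic from $c$ to $y$ must meet the finite set $V_r(A)$. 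This bookkeeping, rather than any new idea, is where the work of the lemma concentrates; the conceptual heart is simply the pairing of the bounded-displacement estimate with Corollary \ref{cor:bigA}.
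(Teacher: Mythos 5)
Your strategy is genuinely different from the paper's and it does work, but as written the proof is not finished: your last paragraph leaves the ``still meets $A$'' point open and proposes to settle it by a projection argument plus Lemma \ref{lem:key}(3), which is vague and, in fact, unnecessary. The observation that closes the gap is elementary. Since you have already reduced to $A\subset H$ via Lemma \ref{Hneighbourhood}, and since the neighbours of the cone vertex $c$ in $\hat\Gamma$ are exactly the elements of $H$, every geodesic $\gamma\in\cf(c,yt)$ meets $H$ exactly once, namely at $\gamma(1)$; so the $A$-crossing is always the \emph{first} vertex after $c$. Moreover your connecting path $\delta$ from $yt$ to $y$ lies in $\Gamma$, so all of its vertices are in $G$ and $\delta$ never passes through $c$. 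Hence, when you delete backtracks from $\gamma\cup\delta$, you keep the initial segment of $\gamma$ up to its first vertex $\gamma(j_0)$ lying on $\delta$, and $j_0\geq 1$ precisely because $c\notin\delta$: the vertex $\gamma(1)\in A$ survives in \emph{every} case, including your ``bounded regime'' $d(c,yt)\leq |t|+1$. Your length estimate passes to the loop-erased path (it is an initial piece of $\gamma$ followed by at most $|t|$ edges of $\delta$), so it is a simple $2|t|$-quasi-geodesic from $c$ to $y$ through $A$, and Corollary \ref{cor:bigA}(2) gives $y\notin M\bigl(c,V_{2|t|}(A)\bigr)$, which completes the proof with $B:=V_{2|t|}(A)$.

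With that one-line repair your argument is correct, and it is a different proof from the paper's. The paper argues by contradiction with sequences: if $x_n\to c$ but $x_ng\notin M(c,A)$ infinitely often, it produces geodesics $\alpha_n\in\cf(x_n,x_ng)$ meeting a fixed finite set $C$ (from Lemma \ref{uniformbasis}), translates them into $\cf(e,g)$, uses finiteness of $\bigcup_{\alpha\in\cf(e,g)}V(\alpha)$ (Lemma \ref{lem:key}(1)) to extract fixed $c'\in C$ and $x$ with $x_n^{-1}c'=x$ for infinitely many $n$, and then a stabilizer/cone-point argument together with continuity of the \emph{left} action forces $c'=c$, a contradiction. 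So the paper's route is qualitative (sequences, stabilizers, left-action dynamics) and never has to worry about simplicity of paths, while yours is quantitative: bounded displacement of right translation plus Corollary \ref{cor:bigA} yields an explicit modulus of continuity, $R_t\bigl(M(c,V_{2|t|}(A))\bigr)\subset M(c,A)$. Your proof is in fact the natural generalization of the paper's own proof of Lemma \ref{rightpara}, which runs the same scheme for right translation by $h\in H$ (there $d(xh,x)\leq 2$, and the paper uses the projection of Definition \ref{perpendicular} with Lemma \ref{concatenation} to sidestep the simplicity issue that you handle by loop-erasure); the price of your route is exactly that bookkeeping, the payoff is an effective neighbourhood statement that the paper's limiting argument does not provide.
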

\begin{proof}
Let $g \in G$, and let $(x_n)$ be a sequence converging to $c$. We
want to prove that $(x_n g)$ converges to $c$. Since the right
action is trivial on $\Delta\Gamma\setminus G$, we can assume that
$x_n\in G$ for all $n$. Fix a finite set $A \subset H$. We will show
that $x_n g \in M(c,A)$ for $n$ large enough.

By Lemma \ref{uniformbasis}, there exists a finite set $C \subset V(\hat{\Gamma}) \setminus \{c\}$ such that for all $y \in M(c,C)$ we have $M(y,C) \subset M(c,A)$. So if $y \in M(c,C)$ and $z \notin M(c,A)$, then
there exists a geodesic between $y$ and $z$ which intersects $C$.

Assume by contradiction that there exist infinitely many indices $n$ for which $x_n g\notin M(c,A)$.
By assumption $x_n \in M(c,C)$ for $n$ large enough, which implies that there exists a geodesic $\alpha_n \in \cf(x_n,x_n g)$ which intersects $C$ for infinitely many $n$'s. Then $x_n^{-1}\alpha_n$ belongs to $\cf(e,g)$ and the set $X := \bigcup_{\alpha \in \cf(e,g)} V(\alpha)$ is finite by Lemma \ref{lem:key}(1). 
Altogether we get that $x_n^{-1}C \cap X \neq \emptyset$ for infinitely many $n$'s. Taking a subsequence if necessary, we find an element $c' \in C$ and $x \in X$ such that $x_n^{-1}c' = x$ for all $n$.

This implies that $x_p^{-1}x_n \in \Stab_G(x)$ for all $p,n \in \N$. Since there are infinitely many distinct elements $x_n$, we get that $x$ has to be a conic point, and for all fixed $p$, the sequence $(x_p^{-1}x_n)_n$ converges to $x$. But by continuity of the left action, the sequence also converges to $x_p^{-1}c$.

Therefore $c = x_p x = c'$. This contradicts our assumption that $c \notin C$.
\end{proof}

We now collect properties of left and right actions of $H$ on $\Delta\Gamma$. Note that the left action of $H$ stabilizes $c$ (and $H = \Stab(c)$).

\begin{lem}
\label{leftpara}
For any finite subsets $A,B\subset H$,  there exists $h\in H$ such that \[h M(c,A) ^c \subset M(c,B).\]
\end{lem}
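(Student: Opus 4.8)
The plan is to reduce the statement to a purely combinatorial fact about which element of $H$ a geodesic emanating from $c$ passes through first, and then to exploit that left translation by a far-out $h$ pushes a fixed finite ``control set'' out of the way. Throughout I use that $c$ is adjacent in $\hat\Gamma$ exactly to the vertices of $H$, so that every geodesic $\alpha\in\cf(c,z)$ ($z\neq c$) starts with an edge into $H$ and, being a geodesic, meets $H$ only at its initial vertex $\alpha(1)\in H$.

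First I would record the reformulation that makes the left action transparent. Since left translation by $h\in H$ is a graph automorphism of $\hat\Gamma$ fixing $c$, it maps geodesics to geodesics, so $\cf(c,hy)=h\,\cf(c,y)$ for every $y$, whence $h\,M(c,A)=M(c,hA)$. Taking complements gives $h\,M(c,A)^c=M(c,hA)^c$, so it suffices to find $h\in H$ with $M(c,hA)^c\subset M(c,B)$, that is $M(c,B)^c\subset M(c,hA)$. By the observation above, for any finite $D\subset H$ one has $z\in M(c,D)$ if and only if $\alpha(1)\notin D$ for every $\alpha\in\cf(c,z)$: membership in these neighbourhoods is governed entirely by the set of possible first vertices.

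Next I would produce the control set. Fixing some $r>0$, I apply Corollary \ref{cor:bigA}(1) with $x=c$ and the finite set $B\subset H$, and invoke Remark \ref{inH} to keep the output inside $H$: this yields a finite set $V:=V_{r,c}(B)\subset H$ such that every geodesic from $c$ to any $z\notin M(c,B)$ meets $V$. Since such a geodesic meets $H$ only at its initial vertex, this says precisely that $\alpha(1)\in V$ for every $\alpha\in\cf(c,z)$ whenever $z\notin M(c,B)$. (Note $c\in M(c,B)$ since $B\subset H\not\ni c$, so indeed $z\neq c$ and the initial vertex exists.)

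Finally I would choose $h$. As $V\subset H$ and $A\subset H$ are finite, the set of $h\in H$ with $hA\cap V\neq\emptyset$ is finite, since for each $a\in A$ and $v\in V$ there is at most one $h$ with $ha=v$; because $H$ is infinite I can pick $h$ with $hA\cap V=\emptyset$. For this $h$ and any $z\notin M(c,B)$, each initial vertex $\alpha(1)$ lies in $V$, hence not in $hA$, so no geodesic from $c$ to $z$ meets $hA$; that is, $z\in M(c,hA)$. This gives $M(c,B)^c\subset M(c,hA)$, which is the desired conclusion. The one delicate point—the main obstacle—is the implication ``a geodesic meets $V$ $\Rightarrow$ its $H$-vertex lies in $V$'', which is exactly why Remark \ref{inH} is needed to keep $V$ inside $H$; without that reduction the intersection point furnished by Corollary \ref{cor:bigA} need not be the first vertex, and the final translation argument would not close.
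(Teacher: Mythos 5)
Your proof is correct and relies on exactly the same ingredients as the paper's: Corollary \ref{cor:bigA}(1) combined with Remark \ref{inH} to produce a finite control set inside $H$, the observation that a geodesic emanating from $c$ meets $H$ precisely at its first vertex, and a translation by some $h\in H$ making two finite subsets of $H$ disjoint. The only difference is organizational: you argue contrapositively, applying the corollary to $B$ and choosing $h$ with $hA\cap V_{r,c}(B)=\emptyset$, whereas the paper applies it to $A$ and chooses $h$ with $hV_0(A)\cap B=\emptyset$ — the two arguments are mirror images of one another.
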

\begin{proof}
  By Remark \ref{inH}, we may assume that $V_0(A) \subset H$. Let $h \in H$ be such that $h
  V_0(A) \cap B = \emptyset$. Let $x\in M(c,A)^c$ and let $\alpha$ be
  any geodesic between $c$ and $h x$, $\alpha\in\cf(c,hx)$ . By Corollary \ref{cor:bigA}(1), $h^{-1} \alpha \in\cf(c,x)$ contains a point $a \in V_0(A)$. Thus $h a$ is the unique
  point of $H$ which is on $\alpha$. In particular $\alpha$ contains
  no point of $B$.
\end{proof}

\begin{lem}
\label{rightpara}
For any $A \subset V(\hat\Gamma)$ finite, there exists a finite $B \subset V(\hat\Gamma)$ such that for any $h\in H$, \[(M(c,B) \cap (G \setminus H)) h \subset M(c,A).\]
\end{lem}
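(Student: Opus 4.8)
The plan is to mimic the proof of Lemma \ref{righthyp} from the hyperbolic case, but to simplify it substantially by exploiting the fact that here the relevant point $c = [H]$ is fixed by the whole of $H$ (unlike $a_\pm$, which was only fixed by the cyclic subgroup $\langle a \rangle$). The statement concerns a \emph{right} translate by an arbitrary $h \in H$ of a point lying outside $H$ but in a neighbourhood $M(c,B)$ of $c$. The strategy is to produce $B$ from $A$ via the uniform basis Lemma \ref{uniformbasis}, and to show that right multiplication by $h$ cannot push $yh$ out of $M(c,A)$ when $y$ is already close enough to $c$ and $y \notin H$.

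First I would apply Lemma \ref{uniformbasis} to $c$ and $A$ to obtain a finite set $C \subset V(\hat\Gamma) \setminus \{c\}$ such that $M(y,C) \subset M(c,A)$ for every $y \in M(c,C)$. By Remark \ref{inH} we may and do assume $C \subset H$. The key geometric input is that for $y \in G \setminus H$, the pair $y$ and $yh$ are both left-translates (by $y$) of $e$ and $h$ respectively, so a geodesic $\alpha \in \cf(y,yh)$ equals $y \cdot \alpha'$ for some $\alpha' \in \cf(e,h)$. By Lemma \ref{lem:key}(1) the union $\bigcup_{\alpha' \in \cf(e,h)} V(\alpha')$ is locally finite, and since $h$ ranges only over $H$ this is where I must be careful: I want a \emph{single} finite $B$ working for all $h \in H$ simultaneously. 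The right choice is to take $B$ slightly larger than $C$, namely to throw into $B$ all the points one reaches by short geodesic excursions; concretely, set $B := M(c,\cdot)$-small enough that $M(c,B) \subset M(c,C)$ and that no point of $M(c,B) \setminus H$ can be joined to a point outside $M(c,A)$ by a geodesic whose only $H$-vertex lies in $C$.

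The cleanest argument runs by contradiction, paralleling Lemma \ref{Oz}: suppose $y \in M(c,B) \cap (G \setminus H)$ but $yh \notin M(c,A)$ for some $h \in H$. Since $y \in M(c,C)$, we have $yh \notin M(y,C)$, so there is a geodesic $\alpha \in \cf(y,yh)$ meeting $C$ at a point $c'$. Then $y^{-1}\alpha \in \cf(e,h)$ passes through $y^{-1}c'$, which lies in the finite set $X := \bigcup_{h \in H}\bigcup_{\alpha' \in \cf(e,h)} V(\alpha')$ --- but this $X$ need not be finite over all of $H$, which is precisely the obstacle. To circumvent it, I would observe that every geodesic from $c$ to any point contains exactly one vertex of $H$ (the remark preceding Lemma \ref{Hneighbourhood}), so $c'$ being an $H$-vertex on a geodesic between two group elements forces $c'$ to be close to $H$ in a controlled way; defining $B$ as the finite set of $G$-vertices lying within the ball of radius equal to $\diam$-type bound of $C$ pulled back through $c$ will then yield the contradiction $y^{-1}c' \in$ a fixed finite set independent of $h$.

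The main obstacle, as indicated, is the uniformity in $h \in H$: a naive bound produces a set $X$ depending on $h$, and $H$ is infinite. The resolution exploits that $c$ is $H$-invariant, so that the picture of $M(c,B)$ relative to $yh$ is, after applying $y^{-1}$, a statement about geodesics from $e$ to $h$ staying near the $H$-vertex $e$; the fineness of $\hat\Gamma$ together with the fact that $c$ meets $H$ in edges of length one bounds how far a geodesic can stray from $H$ before its first $H$-vertex, giving a finite $B$ that works for all $h$ at once. Once $B$ is fixed this way, the contradiction-argument above closes immediately, and the lemma follows.
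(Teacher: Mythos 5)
Your skeleton --- reduce via Lemma \ref{uniformbasis}, translate a geodesic in $\cf(y,yh)$ to one in $\cf(e,h)$, and recognize that the entire difficulty is uniformity in $h\in H$ --- is reasonable, and you correctly observe that Lemma \ref{lem:key}(1) alone cannot produce a set independent of $h$. But the step where you resolve this obstacle, which is the crux of the lemma, does not work as written. The missing idea, which is exactly what powers the paper's proof, is the elementary remark made right after the definition of the coned-off graph: for any $g\in G$ and $h\in H$ one has $d(g,gh)\le 2$ in $\hat\Gamma$, because $g$ and $gh$ lie in the same coset $gH$ and are therefore both adjacent to the cone vertex $[gH]$. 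Nothing in your text invokes this. Instead you propose to take for $B$ ``the finite set of $G$-vertices lying within the ball of radius equal to a $\diam$-type bound of $C$''; but in $\hat\Gamma$ such a set is \emph{not} finite as soon as the radius is at least $2$: any two elements of $H$ are at distance at most $2$ from each other (through $c$), so the $G$-vertices within distance $2$ of $C\subset H$ already include the whole infinite group $H$. Fineness does not repair this --- it controls collections of loops, or of geodesics with prescribed endpoints or edges, not metric balls --- and your sentence that $c'$ is ``forced to be close to $H$ in a controlled way'' is vacuous, since $c'\in C\subset H$ by construction.

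For comparison, the paper's proof avoids Lemma \ref{uniformbasis} entirely: after reducing to $A\subset H$ (Lemma \ref{Hneighbourhood}), take $x\in G\setminus H$ with $x\notin M(c,A)$ and a geodesic $\alpha\in\cf(c,x)$ meeting $A$, necessarily at $a=\alpha(1)$; since $d(xh,x)\le 2$ while $d(xh,c)\ge 2$, one can choose a projection $z_0\ne c$ of $xh$ on $\alpha$, and the concatenation of a geodesic from $xh$ to $z_0$ with the piece of $\alpha$ from $z_0$ to $c$ is a $2$-quasi-geodesic through $a$; Corollary \ref{cor:bigA}(2) then yields $xh\notin M(c,V_2(A))$, so $B:=V_2(A)$ works. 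Your route could in fact be completed using the same distance bound: if $y\in M(c,C)\setminus H$ and $yh\notin M(c,A)$, then some $\alpha\in\cf(y,yh)$ meets $C$; since $\alpha$ has length at most $2$ and its endpoints $y,yh$ are not in $H\supset C$, the meeting point must be the midpoint of a length-two geodesic, hence a Cayley neighbour of $y$ (an edge of $\hat\Gamma$ between two group elements is an edge of $\Gamma$). Thus $y\in C\cdot S$ with $S$ the finite generating set, and $B:=C\cup C\cdot S$ is a finite set for which the contradiction closes. As submitted, however, the uniformity step is a genuine gap.
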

\begin{proof}
By Lemma \ref{Hneighbourhood}, we can assume that $A \subset
H$. Consider an element $x \in G \setminus H$ such that $x \notin
M(c,A)$ and take $h \in H$. We will show that $xh \notin M(c,V_{2}(A))$.

Let $\alpha$ be a geodesic from $c$ to $x$ that meets $A$ and put $a := \alpha(1) \in \alpha \cap A$. Note that since $xh \notin H$, we have $d(xh,c) \geq 2$, and at the same time $d(xh,x) \leq 2$, because $xh$ and $x$ lie in the same coset $xH$. Hence one can choose a projection $z_0$ of $xh$ on $\alpha$ to be different from $c$. Thus the path from $xh$ to $c$ through $z_0$ constructed as in Definition \ref{perpendicular} is a $2$-quasi-geodesic and it contains $a = \alpha(1) \in A$. By Corollary \ref{cor:bigA}(2), this implies that $xh \notin M(c,V_{2}(A))$. Thus $B := V_{2}(A)$ satisfies the conclusion of the lemma.
\end{proof}

As in the hyperbolic case, we deduce the following property of the conjugacy action of $H$ on $G$, see Figure \ref{fig:left}.

\begin{prop}
\label{conjugpara}
  For every $s,t\in G\setminus H$, there exists an $H$-roaming set $F\subset G\setminus H$ such that $sF^ct\cap F^c$ is finite. 
\end{prop}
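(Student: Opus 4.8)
The plan is to mirror the proof of Proposition \ref{conjughyp} from the hyperbolic case, now using the vertex $c = [H]$ in place of the pair of boundary fixed points $\{a_-, a_+\}$, and using the left/right control lemmas just established (Lemma \ref{leftpara} and Lemma \ref{rightpara}) in place of Lemma \ref{lefthyp} and Lemma \ref{righthyp}. The key structural difference is that the role of the infinite-order element $a$ is now played by an arbitrary sequence going to infinity inside $H$: recall from the discussion after Lemma \ref{Hneighbourhood} that any sequence $(h_n)_n \subset H$ going to infinity converges to $c$. This is what will let us push a neighbourhood complement deep into a prescribed neighbourhood of $c$ by left-translating by a suitable element of $H$.

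First I would fix $s, t \in G \setminus H$ and choose the set $F$. Using continuity of the left action at $c$ (the left action stabilizes $c$) and continuity of the right action at $c$ (Lemma \ref{Oz}), I would pick a neighbourhood $V$ of $c$ small enough that $V \cap sVt = \emptyset$; concretely, first take a neighbourhood $V_0$ with $V_0 \cap sV_0 = \emptyset$, then shrink using continuity of the right action by $t$ to get $V$ with $V \cap sVt = \emptyset$ (this is exactly Figure \ref{fig:right}). Set $F := V^c \cap (G \setminus H)$. As in the hyperbolic proof, the sets $sVt \cap H$, $sHt \cap V$ and $sHt \cap H$ are finite because the only cluster point of $H$ is $c \in V$ and the only cluster point of $sHt$ is $sct \in sVt$, which gives that $sF^ct \cap F^c$ is finite.

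It remains to show $F$ is $H$-roaming, i.e.\ to construct a disjoining sequence $(h_k)_k \subset H$ inductively. Put $h_0 := e$, and suppose $h_0, \dots, h_{n-1}$ are built. Set $V_n := \bigcap_{i=0}^{n-1} h_i V h_i^{-1}$, a neighbourhood of $c$ (using that the left action of $H$ fixes $c$, and, for the conjugation, that right multiplication by $h_i^{-1}$ is continuous at $c$ by Lemma \ref{Oz}), and $F_n := V_n^c \cap (G \setminus H)$, which contains all the $h_i F h_i^{-1}$ for $i \leq n-1$. By Lemma \ref{Hneighbourhood} I may assume all relevant neighbourhoods are of the form $M(c, A)$ with $A \subset H$. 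By Lemma \ref{rightpara} there is a neighbourhood $V'$ of $c$ such that $(V' \cap (G \setminus H)) h \subset V_n$ for all $h \in H$. By Lemma \ref{leftpara}, I can find $h_n \in H$ with $h_n V^c \subset V'$, hence $h_n F \subset V' \cap (G \setminus H)$; then $h_n F h_n^{-1} \subset (V' \cap (G \setminus H)) h_n^{-1} \subset V_n$, which is disjoint from $F_n$, and so $h_n F h_n^{-1}$ is disjoint from all previous $h_i F h_i^{-1}$.

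The main obstacle is the bookkeeping in the inductive step: one must make sure that conjugation by $h_n = (h_n \cdot) \circ (\cdot\, h_n^{-1})$ sends $F$ inside $V_n$, and this requires combining the \emph{left} control (Lemma \ref{leftpara}, pushing $V^c$ into $V'$) with the \emph{right} control (Lemma \ref{rightpara}, guaranteeing $V' \cdot h_n^{-1} \subset V_n$ uniformly in the group element) in the correct order, while staying in the region $G \setminus H$ so that the roaming condition of Definition \ref{Hroaming} is literally satisfied. A subtle point worth checking is that one may need $h_n^{-1}$ rather than $h_n$ in the right-translation estimate, so the application of Lemma \ref{rightpara} should be set up so that it holds for all elements of $H$ simultaneously (which it does, being uniform in $h$), removing any directional worry. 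Once this is arranged, the conclusion is immediate and, combined with the finiteness of $sF^ct \cap F^c$ established above, gives exactly the hypothesis of Proposition \ref{prop:condition}.
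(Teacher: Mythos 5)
Your proof is correct and follows exactly the paper's route: choose $V$ with $V \cap sVt = \emptyset$ via Lemma \ref{Oz}, set $F := V^c \cap (G\setminus H)$, get finiteness of $sF^ct \cap F^c$ from the cluster-point observation, and build the disjoining sequence inductively from Lemmas \ref{leftpara} and \ref{rightpara}. In fact you spell out the inductive construction that the paper compresses into ``as in Proposition \ref{conjughyp}'', including the correct observation that Lemma \ref{rightpara} is uniform in $h \in H$ so that conjugation by $h_n$ (rather than translation) lands inside $V_n$.
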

\begin{proof}
  We proceed as in Proposition \ref{conjughyp}. By continuity of left and right action at $c$ (Lemma \ref{Oz}), there exists a neighbourhood $V$ of $c$ such that $V$ and $sVt$ are disjoint (see Figure \ref{fig:right}). We observe that $sVt \cap H$, $sHt \cap V$ and $sHt \cap H$ are finite because the cluster point of $H$ lies in $V$ and the cluster point of $sHt$ lies in $sVt$.

Therefore, the set $F :=V^c\cap(G\setminus H)$ is such that $sF^ct \cap F^c$ is finite. One can deduce that this set is $H$-roaming from Lemma \ref{rightpara} and Lemma \ref{leftpara}, as in Proposition \ref{conjughyp}.
\end{proof}

Now Theorem B follows from Proposition \ref{prop:condition}.

\begin{rem}
\label{remB}
For later use, note that the set $F$ in Proposition \ref{conjugpara} can be chosen such that $sF^ct\cap F^c\subset H$ and hence $s(F \cup H)^ct \subset F \cup H$.
\end{rem}

\subsection{Proof of Corollary C}

Assume that $G$ is hyperbolic relative to a family $\rg$ of amenable subgroups, and consider an infinite maximal amenable subgroup $H < G$.
We will show that $G$ is hyperbolic relative to $\rg \cup \{ H \}$. Then Theorem B will directly allow to conclude that $LH$ is maximal amenable inside $LG$.

The argument relies on Osin's work \cite{Os06a,Os06b}.

\begin{defn}
An element $g \in G$ is said to be {\it hyperbolic} if it has infinite order and is not contained in a conjugate of a group in $\rg$.
\end{defn}

\begin{defn}
A subgroup $K$ of $G$ is said to be {\it elementary} if it is either finite, or contained in a conjugate of a group in $\rg$, or if it contains a finite index cyclic subgroup $\langle g \rangle$, for some hyperbolic element $g$.
\end{defn}

The (Gromov-)Tukia's strong Tits alternative (see \cite[Theorem 2T, Theorem 3A]{Tu94} using \cite[Definition 1]{Bo12}) states that a non-elementary subgroup $K$ of $G$ contains a copy of the free group on two generators.

In particular, our amenable subgroup $H$ is elementary. If it is contained in a conjugate $aH_ia^{-1}$ of a group in $\rg$, then it is equal to $aH_ia^{-1}$ by maximal amenability, and Theorem B gives the result. 

Now assume that $H$ contains a finite index cyclic subgroup $\langle g \rangle$, for some hyperbolic element $g$. Osin showed in \cite[Section 3]{Os06b} that such a hyperbolic element $g$ is contained in a unique maximal elementary subgroup $E(g)$ (thus $H = E(g)$, by maximal amenability). Moreover he showed \cite[Corollary 1.7]{Os06b} that $G$ is hyperbolic relative to $\rg \cup \{E(g)\}$. This is what we wanted to show.

\section{Product case: proof of Theorem D}

Observe that if $H_i<G_i$, for $i=1,2$, are infinite maximal amenable subgroups, then the von Neumann subalgebra $L(H_1\times H_2)<L(G_1\times G_2)$ is neither maximal Gamma nor mixing as soon as $H_1\neq G_1$. 

Therefore to treat the product case, we will have to deal with relative notions. We could consider a relative notion of property Gamma and proceed as in Section \ref{central sequences}. We choose instead to apply directly the work of C. Houdayer and the {\it relative asymptotic orthogonality property}, \cite{Ho14b}. Note that in the case of virtually abelian subgroups $H_1$, $H_2$ we could also use \cite[Theorem 2.8]{CFRW08}.

\begin{defn}
\label{wmixingthrough}
Let $A \subset N \subset (M,\tau)$ be finite von Neumann algebras. The inclusion $N \subset M$ is said to be {\it weakly mixing through $A$} if there exists a sequence of unitaries $(v_n)_n \subset \cU(A)$ such that \[\lim_n \Vert E_N(xv_ny)\Vert_2 = 0, \, \forall x,y \in M \ominus N.\]
\end{defn}

\begin{exmp}
\label{exmp}
If $H < G$ is an inclusion of groups satisfying the assumption of Proposition \ref{prop:condition} (\emph{e.g.} if $H$ and $G$ are as in Theorem B), then for any trace-preserving action $G \curvearrowright (Q,\tau)$ on a finite von Neumann algebra, the inclusion $Q \rtimes H \subset Q \rtimes G$ is weakly mixing through $LH$. The proof is the same as the proof of Proposition \ref{propmixing}.
\end{exmp}

\begin{defn}[\cite{Ho14b}, Definition 5.1]
\label{relativeAOP}
Let $A \subset N \subset (M,\tau)$ be an inclusion of finite von Neumann algebras. We say that $N \subset M$ has the {\it asymptotic orthogonality property relative to $A$} if for every $\Vert \cdot \Vert_\infty$-bounded sequences $(x_n)_n$ and $(y_n)_n$ in $M \ominus N$ which asymptotically commute with $A$ in the $\|\cdot\|_2$-norm, we have that
\[\lim_n \langle ax_nb,y_n \rangle = 0, \text{ for all } a,b \in M \ominus N.\]
\end{defn}

\begin{thm}[\cite{Ho14b}, Theorem 8.1]
\label{thmrelativeAOP}
Let $A \subset N \subset (M,\tau)$ be an inclusion of finite von Neumann algebras.
Assume the following:
\begin{enumerate}
\item $A$ is amenable.
\item The inclusion $N \subset M$ is weakly mixing through $A$.
\item The inclusion $N \subset M$ has the relative asymptotic orthogonality property relative to $A$.
\end{enumerate}
Then any amenable von Neumann subalgebra of $M$ containing $A$ is automatically contained in $N$.
\end{thm}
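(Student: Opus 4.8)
The plan is to run Popa's asymptotic orthogonality scheme \cite{Po83} in its relative form, letting the three hypotheses feed into one another: amenability manufactures approximately central vectors, weak mixing through $A$ kills the pieces that land in $N$, and the relative asymptotic orthogonality property forces the surviving inner products to vanish. First I would reformulate the goal. Let $P$ be an amenable von Neumann subalgebra with $A \subset P \subset M$. To prove $P \subset N$ it suffices to show that $\Vert d - E_N(d) \Vert_2 = 0$ for every $d \in P$, i.e. that any $d \in P \ominus N$ is $0$. So I fix such a $d$, normalized so that $\Vert d \Vert_2 = 1$, and aim for a contradiction.

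Next I would use amenability. Since $P$ is amenable, it is amenable relative to $N$ inside $M$, so by Connes' characterization \cite{Co76} there is a $P$-central state on the Jones basic construction $\langle M, e_N\rangle$ restricting to $\tau$ on $M$; approximating it by normal states yields a sequence of unit vectors $(\xi_n)_n$ in $L^2\langle M, e_N\rangle$ that is asymptotically $P$-central, $\Vert p \xi_n - \xi_n p\Vert_2 \to 0$ for all $p \in P$, and trace-compatible, $\langle x \xi_n y, \xi_n\rangle \to \tau(xy)$ for $x,y \in M$. Because $A \subset P$ and $A$ is amenable (hypothesis (1)), these vectors in particular asymptotically commute with $A$, so they live in the ``asymptotic centralizer of $A$'' where the relative AOP can be tested. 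The heart of the argument is the dichotomy on the quantity $\langle d\, \xi_n, \xi_n\, d\rangle$: asymptotic $P$-centrality gives $\xi_n d \approx d \xi_n$, whence this inner product tends to $\langle d^*d\, \xi_n, \xi_n\rangle \to \tau(d^*d) = \Vert d\Vert_2^2 = 1$; on the other hand $d = d - E_N(d) \in M \ominus N$, so the left translate $d\,\xi_n$ and the right translate $\xi_n\, d$ are precisely of the form governed by the relative asymptotic orthogonality property (hypothesis (3), Definition \ref{relativeAOP}), which forces the same inner product to $0$. The contradiction $1 = 0$ yields $d = 0$.

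Throughout, the weak mixing through $A$ (hypothesis (2), Definition \ref{wmixingthrough}) is the device that disposes of the $N$-components. Whenever an expression of the shape $E_N(x v_n y)$ appears with the mixing unitaries $v_n \in \cU(A)$, it tends to $0$, and this is exactly what lets me replace the vectors and the element $d$ by their parts orthogonal to $N$ without error, and what guarantees that the sequences fed into the AOP genuinely sit in $M \ominus N$ rather than leaking into $N$. In effect, mixing reduces the general pairing to the $(M \ominus N)$-versus-$(M \ominus N)$ pairing that hypothesis (3) annihilates.

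I expect the main obstacle to be the reconciliation between the two natural habitats of the objects involved. Amenability produces vectors in the coarse/basic-construction Hilbert space $L^2\langle M, e_N\rangle$, which are \emph{not} sequences of elements of $M$, whereas the relative AOP of Definition \ref{relativeAOP} is phrased for bounded sequences $(x_n)_n,(y_n)_n \subset M \ominus N$. Bridging this gap — passing from the abstract almost-$P$-central vectors to something to which hypothesis (3) literally applies, via a density/approximation argument inside $\langle M, e_N\rangle$ and a careful truncation of the $\xi_n$ — together with the bookkeeping (powered by weak mixing) that shows the cross terms vanish and that the central computation really reproduces $\Vert d\Vert_2^2$, is where the genuine work lies. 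Once that translation is in place, amenability and the relative AOP are invoked essentially as black boxes.
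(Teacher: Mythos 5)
First, a point of order: the paper you are reading does not prove this statement at all. It is Houdayer's Theorem 8.1, quoted verbatim from \cite{Ho14b} and used as a black box in the proof of Theorem D, so the only meaningful comparison is with Houdayer's own argument. Your proposal does reproduce the broad strategy of that argument (which in turn follows Popa's scheme from \cite{Po83} and its relative form in \cite{CFRW08}): amenability of $P$ yields a $P$-central state on the basic construction $\langle M, e_N\rangle$ restricting to $\tau$ on $M$, hence almost $P$-central unit vectors $\xi_n \in L^2\langle M, e_N\rangle$; then for $d \in P \ominus N$ one plays the centrality estimate $\langle d\xi_n, \xi_n d\rangle \to \Vert d \Vert_2^2$ against a vanishing statement extracted from hypotheses (2) and (3).

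However, there is a genuine gap, and it is exactly the one you flag yourself in your final paragraph: Definition \ref{relativeAOP} applies to bounded \emph{sequences of elements of $M \ominus N$} that asymptotically commute with $A$, and it cannot be invoked, even approximately, on the vectors $d\xi_n$ and $\xi_n d$, which live in $L^2\langle M, e_N\rangle$. Closing this gap is not bookkeeping; it is the entire content of the theorem. In outline, one must decompose $L^2\langle M, e_N\rangle \cong L^2(M)\otimes_N L^2(M)$ according to $L^2(M) = L^2(N)\oplus L^2(M\ominus N)$ into four pieces: the piece $L^2(M)e_N$ pairs to zero against $d$ automatically, since for $\xi = \hat{z}e_N$ one gets $\langle d\xi,\xi d\rangle = \tau\bigl(d^*E_N(z^*dz)\bigr) = 0$ because $E_N(d^*)=0$; the two cross pieces, each isomorphic to $L^2(M)\ominus L^2(N)$ as $A$-$A$ bimodules, are killed by weak mixing through $A$, via $\vert\tau(x^*v_nxv_n^*)\vert = \vert\tau\bigl(E_N(x^*v_nx)v_n^*\bigr)\vert \leq \Vert E_N(x^*v_nx)\Vert_2 \to 0$ for $x \in M\ominus N$, which forbids nonzero almost $A$-central vectors there; and only on the remaining piece $L^2(M\ominus N)\otimes_N L^2(M\ominus N)$ does one approximate by finite sums $\sum_k x_k e_N y_k$ with $x_k,y_k \in M\ominus N$ and run an approximation/ultraproduct argument producing genuine sequences to which hypothesis (3) literally applies. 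Note also that your division of labor is slightly off: weak mixing does not dispose of the parts "landing in $N$" (those are harmless), it disposes of the mixed components, while the relative AOP handles the doubly orthogonal component. So: right strategy, correctly attributed, but the step you defer as "the main obstacle" is precisely the theorem, and a referee would not accept the proposal as a proof.
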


From now on, we consider the crossed-product von Neumann algebras $Q\rtimes G$ associated to a trace-preserving actions $G \curvearrowright (Q,\tau)$. As for group von Neumann algebras, denote by $u_g$ the unitaries of $Q \rtimes G$ corresponding to elements $g \in G$ and for any set $F \subset G$ denote by $P_F : L^2(Q,\tau) \otimes \ell^2(G) \to L^2(Q,\tau) \otimes \ell^2(F)$ the orthogonal projection.

\begin{prop}
\label{relativecondition}
Let $H < G$ be an inclusion of two infinite groups, with $H$ amenable. Consider an action $G \curvearrowright (Q,\tau)$ of $G$ on a tracial von Neumann algebra, and assume that for any $s,t \in G \setminus H$, there exists an $H$-roaming set $F \subset G\setminus H$ such that $s(F \cup H)^ct\subset F \cup H$.

Then the inclusion $Q \rtimes H \subset Q \rtimes G$ has the asymptotic orthogonality property relative to $LH$.
\end{prop}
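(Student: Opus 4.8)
The plan is to prove the relative asymptotic orthogonality property (relative AOP) by reducing everything, via localization of Fourier coefficients, to the combinatorial disjointness we already control. Concretely, I am given bounded sequences $(x_n)_n$ and $(y_n)_n$ in $(Q \rtimes G) \ominus (Q \rtimes H)$ that asymptotically commute with $LH$ in $\Vert \cdot \Vert_2$, and I must show $\lim_n \langle a x_n b, y_n \rangle = 0$ for all $a,b \in (Q \rtimes G) \ominus (Q \rtimes H)$. First I would reduce to the case where $a = c u_s$ and $b = d u_t$ for $c,d \in Q$ and $s,t \in G \setminus H$, by a standard $\Vert \cdot \Vert_2$-density and linearity argument (the span of such elements is dense in $(Q \rtimes G) \ominus (Q \rtimes H)$, and since the sequences are bounded, it suffices to check the limit on a dense set of $a,b$). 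Because $\langle a x_n b, y_n \rangle$ is controlled by $\Vert a \Vert_\infty \Vert b \Vert_\infty$, and the $c,d \in Q$ factors only contribute bounded multipliers that commute with the group directions, the essential content sits in the group-theoretic indices $s,t$.

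The heart of the matter is to exploit the hypothesis: for the fixed $s,t \in G \setminus H$ I obtain an $H$-roaming set $F \subset G \setminus H$ with $s(F \cup H)^c t \subset F \cup H$. The key structural input is that $(x_n)_n$ is $LH$-central, so the relative version of Lemma \ref{lemmalnormal} applies: the localization argument of that lemma works verbatim in $Q \rtimes G$ because conjugation by $u_{h_k}$ for a disjoining sequence $(h_k)_k \subset H$ permutes the Fourier supports exactly as in the group case (the $Q$-coefficients are carried along but the $\ell^2$-orthogonality of the blocks $P_{h_k^{-1} F h_k}$ is unaffected). Hence $\lim_n \Vert P_F(x_n)\Vert_2 = 0$, and likewise $\lim_n \Vert P_F(y_n)\Vert_2 = 0$. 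I would then split $x_n = P_F(x_n) + P_{F^c}(x_n)$ and $y_n = P_F(y_n) + P_{F^c}(y_n)$, discard the $P_F$ pieces (which vanish in $\Vert \cdot \Vert_2$), and be left with estimating $\langle a\, P_{F^c}(x_n)\, b, P_{F^c}(y_n) \rangle$.

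Now I use the geometric containment. The Fourier support of $a\, P_{F^c}(x_n)\, b = c u_s P_{F^c}(x_n) u_t d$ lies inside $s (F^c) t = s(F \cup H)^c t \cup (\text{indices coming from } H)$; more carefully, writing $F^c = (F \cup H)^c \sqcup H$, the hypothesis $s(F \cup H)^c t \subset F \cup H$ means the ``off-$H$'' part of $x_n$ is pushed by $s \cdot t$ into $F \cup H$, hence into the range of $P_{(F^c)^c} = P_F$ plus a controlled $H$-part. The clean way to organize this is: since $y_n \in (Q \rtimes G) \ominus (Q \rtimes H)$, its $H$-Fourier coefficients vanish, so only the component of $a P_{F^c}(x_n) b$ supported on $G \setminus H$ can pair nontrivially with $y_n$; but the hypothesis forces the $(F \cup H)^c$-part of $x_n$ to land, after multiplication by $u_s,u_t$, inside $F \cup H$, whose $G\setminus H$-portion is exactly $F$. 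Therefore the pairing is absorbed into $\langle \,\cdot\,, P_F(y_n)\rangle$, which tends to $0$ by the relative Lemma \ref{lemmalnormal} applied to $(y_n)_n$. Assembling these reductions gives $\lim_n \langle a x_n b, y_n\rangle = 0$.

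The main obstacle I anticipate is bookkeeping the decomposition $F^c = (F \cup H)^c \sqcup H$ together with the $Q$-valued coefficients so that the support-tracking is airtight: one must verify that multiplication by $c u_s$ on the left and $u_t d$ on the right transports the $\ell^2(Q) \otimes \ell^2(G)$-support of $P_{F^c}(x_n)$ precisely into $\ell^2(Q) \otimes \ell^2(s F^c t)$ (the $Q$-factors $c,d$ do not spread the group support, since $u_s(\eta \otimes \delta_g) = (s\cdot\eta) \otimes \delta_{sg}$ and multiplication by $c \in Q$ preserves the $\delta_g$-slot), and that the finitely-many or $H$-supported error terms either vanish against $y_n \perp Q \rtimes H$ or are killed by the roaming estimate. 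Once the containment $s(F \cup H)^c t \subset F \cup H$ is correctly matched against the orthogonality $y_n \perp Q \rtimes H$ and the vanishing $\Vert P_F(y_n)\Vert_2 \to 0$, the estimate closes.
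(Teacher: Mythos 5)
Your proposal is correct and follows essentially the same route as the paper's proof: reduce by linearity/density to $a = cu_s$, $b = du_t$ with $s,t \notin H$, run the argument of Lemma \ref{lemmalnormal} verbatim in $Q \rtimes G$ to kill $\Vert P_F(x_n)\Vert_2$ and $\Vert P_F(y_n)\Vert_2$, use that elements of $(Q\rtimes G)\ominus(Q\rtimes H)$ have no $H$-supported Fourier coefficients so that $P_{F^c} = P_{(F\cup H)^c}$ on them, and conclude from the containment $s(F\cup H)^ct \subset F\cup H$. Your slightly more explicit bookkeeping of the $Q$-coefficients $c,d$ (which the paper suppresses by writing only $u_s, u_t$) and your final pairing against $P_F(y_n)$ rather than exact orthogonality of the $(F\cup H)^c$-projections are only cosmetic variations.
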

\begin{proof}
Consider two $\Vert \cdot \Vert_\infty$-bounded sequences $(x_n)_n$ and $(y_n)_n$ in $(Q\rtimes G) \ominus (Q\rtimes H)$ which asymptotically commute with $LH$. By linearity and density it is sufficient to check that for any $s,t \notin H$, \[\lim_n \langle u_sx_nu_t,y_n \rangle = 0.\]
Fix $s,t \in G \setminus H$. There exists an $H$-roaming set $F$ such that $s(F \cup H)^ct\subset F \cup H$. Proceeding as in the proof of Lemma \ref{lemmalnormal}, it is easy to show that $\lim_n \Vert P_F(x_n)\Vert_2 = \lim_n \Vert P_F(y_n) \Vert_2 = 0$.
Note also that for all $n$, we have $x_n = P_{H^c}(x_n)$ and $y_n = P_{H^c}(y_n)$. Therefore
\begin{align*}
\lim_n \langle u_sx_nu_t,y_n \rangle & = \lim_n \langle u_sP_{F^c}(x_n)u_t,P_{F^c}(y_n) \rangle\\
& = \lim_n \langle u_sP_{(F \cup H)^c}(x_n)u_t,P_{(F \cup H)^c}(y_n) \rangle = 0,
\end{align*}
because $s(F \cup H)^ct\subset F \cup H$. This ends the proof of the proposition.
\end{proof}

\begin{proof}[Proof of Theorem D]
For $i=1,\dots,n$, let $G_i$ be a hyperbolic group relative to a family $\rg_i$ of subgroups and let $H_i\in \rg_i$ be an infinite amenable group. Consider the inclusion 
\[H:=H_1 \times \cdots \times H_n<G:= G_1\times \cdots \times G_n.\]
Let $(Q,\tau)$ be a finite amenable von Neumann algebra and consider a trace-preserving action $G \curvearrowright (Q,\tau)$ of $G$. Put $N := Q \rtimes H$ and $M := Q \rtimes G$.

Assume that $P$ is an intermediate amenable von Neumann subalgebra: $N \subset P \subset M$. We have to show that $P = N$. In order to do so, we will show that for all $i = 1,\dots, n$, we have 
\[ P \subset N_i:= Q \rtimes (G_1 \times \cdots \times G_{i-1} \times H_i \times G_{i+1} \times \cdots \times G_n).\]
This is enough to conclude, because $N = \cap_{i=1}^n N_i$.

For $i \in \{1,\dots,n\}$, we set $A_i := LH_i$ and $Q_i := Q \rtimes \hat G_i$, where $\hat G_i$ is the direct product of all $G_j$, $j \neq i$. Then we have $N_i \simeq Q_i \rtimes H_i$ and $M \simeq Q_i \rtimes G_i$.

By Proposition \ref{conjugpara} (and Remark \ref{remB}), we see that $H_i \subset G_i$ satisfies the assumptions of Proposition \ref{relativecondition} so that $N_i \subset M$ has the asymptotic orthogonality property relative to $A_i$.
Moreover the Example \ref{exmp} tells us that $N_i \subset M$ is (weakly) mixing through $A_i$.

By Theorem \ref{thmrelativeAOP}, one concludes that the amenable algebra $P$, which contains $A_i$, is contained in $N_i$. This ends the proof of Theorem D.
\end{proof}


\bibliographystyle{alpha1}

\end{document}